\documentclass[11pt,oneside]{amsart}

\usepackage{amsmath,ifthen, amsfonts, amssymb,
srcltx,
   amsopn}

\usepackage{graphicx}


\newcommand{\showcomments}{yes}
\renewcommand{\showcomments}{no}

\newsavebox{\commentbox}
\newenvironment{com}%
{\ifthenelse{\equal{\showcomments}{yes}}%
{\footnotemark
        \begin{lrbox}{\commentbox}
        \begin{minipage}[t]{1.25in}\raggedright\sffamily\tiny
        \footnotemark[\arabic{footnote}]}
{\begin{lrbox}{\commentbox}}}%
{\ifthenelse{\equal{\showcomments}{yes}}%
{\end{minipage}\end{lrbox}\marginpar{\usebox{\commentbox}}}
{\end{lrbox}}}

\newtheorem{thm}{Theorem}[section]
\newtheorem{lem}[thm]{Lemma}

\newtheorem{cor}[thm]{Corollary}

\newtheorem{prop}[thm]{Proposition}

\theoremstyle{definition}

\newtheorem{rem}[thm]{Remark}

\DeclareMathOperator{\stabilizer}{Stabilizer}

\newcommand{\field}[1]{\mathbb{#1}}
\newcommand{\integers}{\ensuremath{\field{Z}}}

\newcommand{\hyperbolic}{\ensuremath{\field{H}}}

\newcommand{\boundary}   {{\ensuremath \partial}}

\setlength{\textwidth}{5,8in}
\setlength{\textheight}{8in}
\hoffset=-.67in
\voffset=.75 in
\begin{document}

\title{A boundary criterion for cubulation}
\author[N.~Bergeron]{Nicolas Bergeron}
\address{Institut de Math\'ematiques de Jussieu\\
Unit\'e Mixte de Recherche 7586 du CNRS\\
Universit\'e Pierre et Marie Curie\\
4, place Jussieu 75252 Paris Cedex 05, France}
\email{bergeron@math.jussieu.fr}

\author[D.~T.~Wise]{Daniel T. Wise}
           \address{Dept. of Math. \& Stats.\\
                    McGill University \\
                    Montreal, Quebec, Canada H3A 2K6 }
           \email{wise@math.mcgill.ca}
\subjclass[2000]{53C23, 20F36, 20F55, 20F67, 20E26}
\keywords{Word-hyperbolic groups,
Codimension-1 subgroup, boundary, triple-space}
\date{\today}
\thanks{Research supported by NSERC}

\begin{abstract}
We give a criterion in terms of the boundary
for the existence of a proper cocompact  action of a word-hyperbolic group
on a CAT(0) cube complex.
We describe applications towards lattices and hyperbolic 3-manifold groups.
In particular, by combining the theory of special cube complexes, the surface subgroup result of Kahn-Markovic,
and Agol's criterion,
we find that every subgroup separable closed hyperbolic 3-manifold
is virtually fibered.
 \end{abstract}

\maketitle

\section{Introduction}
Let $G$ be a finitely generated group with Cayley graph $\Gamma$.
A subgroup $H\subset G$ is \emph{codimension-1} if it has a finite neighborhood $N_r(H)$
such that $\Gamma-N_r(H)$ contains at least two components that are \emph{deep}
in the sense that they do not lie in any $N_s(H)$. For instance any $\integers^n$ subgroup of $\integers^{n+1}$ is codimension-1,
and any infinite cyclic subgroup of a closed surface subgroup is as well.

Given a finite collection of codimension-1 subgroups $H_1,\dots, H_k$ of $G$, Michah Sageev introduced a simple but powerful
construction that yields an action of $G$ on a CAT(0) cube complex $C$ that is \emph{dual} to a system of walls associated to these subgroups
\cite{Sageev95}.

For each $i$, let $N_i=N_{r_i}(H_i)$ be a neighborhood of $H_i$ that separates $\Gamma$ into at least two deep components.
The \emph{wall} associated to $N_i$ is a fixed partition $\{\overleftarrow N_i, \overrightarrow N_i\}$
 consisting of one of these deep components $\overleftarrow N_i$ together with its complement $\overrightarrow N_i=\Gamma-\overleftarrow N_i$,
and more generally, the translated \emph{wall} associated to $gN_i$ is the partition $\{g\overleftarrow N_i, g\overrightarrow N_i\}$.
The two parts of the wall are \emph{halfspaces}.

We presume a certain degree of familiarity with the details of Sageev's construction here,
but hope that any interested reader will mostly be able to follow the arguments.
We shall not describe the structure of the dual cube complex $C$ here but will describe its 1-skeleton.
A $0$-cube of $C$ is a choice of one halfspace from each wall such that each element of $G$ lies in all but finitely many
of these chosen halfspaces. A wall is thought of as \emph{facing} the points in its chosen halfspace.
Two 0-cubes are joined by a 1-cube precisely when their choices differ on exactly one wall.

The walls in $\Gamma$ are in one-to-one correspondence with the hyperplanes of the CAT(0) cube complex $C$ given by Sageev's construction,
and the stabilizer of each such hyperplane equals the codimension-1 subgroup that stabilizes the associated translated wall:
The stabilizer of the hyperplane corresponding to a translated wall  associated to $gN_i$ is commensurable with $gH_i g^{-1}$.

Cocompactness properties of the resulting action were analyzed in \cite{Sageev97} where Sageev proved that:
\begin{prop}\label{prop:sageev compactness}
Let $G$ be a word-hyperbolic group, and $H_1,\dots, H_k$ be a collection of quasiconvex codimension-1 subgroups.
Then the action of $G$ on the dual cube complex is cocompact.
\end{prop}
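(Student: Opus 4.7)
The plan is to prove cocompactness in two steps: (i) bound the dimension of $C$, and (ii) show finitely many $G$-orbits of cubes in each dimension. Recall that a $d$-cube of $C$ is determined by a collection of $d$ walls that pairwise \emph{cross} --- meaning all four halfspace intersections contain deep regions --- together with a choice of adjacent $0$-cube. So the essential input is a structural result on pairwise-crossing wall collections in $\Gamma$.

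The key lemma I would establish is: there exist constants $D, K$ depending only on $G$ and the $H_i$'s such that any pairwise-crossing family $W_1,\ldots,W_n$ of walls satisfies $n \leq D$, and admits a common ``center'' $p \in \Gamma$ with $d(p, W_j) \leq K$ for every $j$. The argument passes through $\boundary G$: quasiconvexity endows each wall with a pair of limit sets in $\boundary G$, the codimension-1 hypothesis ensures these limit sets separate $\boundary G$, and two walls cross precisely when their limit sets link. A $\delta$-hyperbolic fellow-travel / tripod argument then forces the geodesics joining pairwise-linked endpoint pairs to come uniformly close to a single point, yielding $p$; the bound $n \leq D$ follows since every $W_j$ lies within distance $K$ of $p$, while only boundedly many walls (over all $G$-translates of the $N_i$) can meet a ball of fixed radius in $\Gamma$, by quasiconvexity of the $H_i$.

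Given the lemma, $\dimension C \leq D$. For orbit-finiteness, assign to each cube $c$ its crossing-wall data and a center $p_c$; translate by $G$ so that $p_c$ lies in a fixed finite fundamental region $F$ for the $G$-action on vertices of $\Gamma$. The crossing walls of $c$ then lie in the finite pool of walls within distance $K + \mathrm{diam}(F)$ of a fixed basepoint, and the adjacent $0$-cube data near $p_c$ is also finite in type, so only finitely many cubes have $p_c \in F$. Hence there are finitely many $G$-orbits of cubes in each dimension, proving cocompactness.

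The main obstacle is the key lemma, specifically the production of a center common to all walls in a pairwise-crossing family. This is where codimension-1, quasiconvexity, and hyperbolicity must be used in concert: crossings are translated into linkings of limit sets on $\boundary G$, and these linkings are then converted back into a geometric statement in $\Gamma$ using $\delta$-thinness of triangles together with the equivariant quasigeodesic neighborhoods supplied by the quasiconvex $H_i$. Once the center exists, the dimension bound and orbit-finiteness are relatively formal, but without it both fail (one can easily imagine crossing walls that march off to infinity along many different directions, which is exactly what hyperbolicity plus quasiconvexity prevents).
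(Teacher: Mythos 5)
The paper itself does not prove this proposition: it is quoted from Sageev's work \cite{Sageev97} (see also \cite{HruskaWiseAxioms}), so there is no in-paper argument to compare against; your proposal must stand on its own as a reconstruction of that proof. Your key lemma is the right geometric heart of the matter: using $\delta$-hyperbolicity and quasiconvexity to show that a pairwise-crossing family of walls has uniformly bounded cardinality and passes uniformly close to a common point, together with the fact that only boundedly many translates $gN_i$ meet a fixed ball, is exactly what bounds the dimension and gives finitely many $G$-orbits of crossing families.

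However, the passage from that lemma to cocompactness has a genuine gap. A cube of the dual complex is \emph{not} determined by its crossing-wall family plus ``$0$-cube data near $p_c$'': each corner of the cube is a $0$-cube, i.e.\ a global choice of halfspace for \emph{every} translated wall, and a priori infinitely many distinct cubes share the same crossing-wall family, differing only on walls far from the center (think of all the $1$-cubes dual to a single wall, which form the carrier of its hyperplane). So ``only finitely many cubes have $p_c \in F$'' does not follow from the finiteness of the wall pool; and for $0$-cubes the family is empty, so the centering lemma says nothing at all about them. What is missing is precisely the statement that there are finitely many orbits of $0$-cubes, equivalently that each hyperplane stabilizer (commensurable with a conjugate of some $H_i$) acts cocompactly on its hyperplane; this is where the remaining content of Sageev's theorem lies. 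One standard way to complete your outline is an induction on dimension: the hyperplane dual to a wall $W$ is the cube complex dual to the walls crossing $W$, your lemma plus the cocompact action of $\stabilizer(W)$ on $W$ (quasiconvexity) gives finitely many $\stabilizer(W)$-orbits of such walls and of their crossing families, so by induction $\stabilizer(W)$ acts cocompactly on its hyperplane, which yields finitely many orbits of $n$-cubes for $n\geq 1$; since every $0$-cube of the (connected, principal) dual complex lies on a $1$-cube, dimension zero follows. Two smaller inaccuracies to fix along the way: two walls cross when all four quadrant intersections are \emph{nonempty} (deepness is not required, and requiring it would miss cubes), and crossing is not equivalent to linking of limit sets (linking implies crossing, but a nonempty quadrant can be bounded and contribute no boundary points), so the centering lemma should be proved directly with thin quadrilaterals in $\Gamma$ rather than purely on $\boundary G$.
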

We refer to \cite{HruskaWiseAxioms} for a more elaborate discussion of the finiteness properties of the action
obtained from Sageev's construction, as well as for background and an account of the literature.

In parallel to Proposition~\ref{prop:sageev compactness},
is a properness criterion which we state as follows (see for instance \cite{HruskaWiseAxioms}).
We use the notation $\#(p,q)$ for the number of walls separating $p,q$.
\begin{prop}\label{prop:linear separation}
If  $\#(1,g)\rightarrow \infty$ as $d_\Gamma(1,g)\rightarrow \infty$ then $G$ acts properly on $C$.
\end{prop}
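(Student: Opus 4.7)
The plan is to identify the $G$-orbit of a distinguished $0$-cube of $C$ with $G$ itself in a way that translates combinatorial distance in $C$ into the wall-separation count $\#(1,g)$, so that the hypothesis feeds directly into properness.

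First I would fix the canonical $0$-cube $v_0$ of $C$ that ``points toward $1$'': for every translated wall $\{g\overleftarrow N_i, g\overrightarrow N_i\}$, choose the halfspace containing $1 \in \Gamma$. One checks that this is a legitimate $0$-cube of Sageev's construction, since for any $h \in G$ only finitely many walls separate $1$ from $h$, so $h$ lies in all but finitely many of the chosen halfspaces. By equivariance of the action, $gv_0$ is the $0$-cube whose chosen halfspaces are exactly those containing $g$. Hence the walls along which $v_0$ and $gv_0$ differ are precisely those separating $1$ from $g$, so that the $1$-skeleton (combinatorial) distance in $C$ satisfies
\[
d_{C^{(1)}}(v_0, gv_0) \;=\; \#(1,g).
\]

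Next I would invoke the standard fact that a cubical action on a CAT(0) cube complex is proper iff for every $R>0$ the set $\{g \in G : d_{C^{(1)}}(v_0, gv_0)\leq R\}$ is finite. One direction is immediate; for the other, any compact $K\subset C$ lies in a finite subcomplex and hence in a combinatorial ball around $v_0$, so $gK\cap K\neq\emptyset$ forces $d_{C^{(1)}}(v_0,gv_0)$ to be bounded.

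Finally, the hypothesis $\#(1,g)\to\infty$ as $d_\Gamma(1,g)\to\infty$ reads contrapositively: for every $R$ there exists $R'$ such that $\#(1,g)\leq R$ implies $d_\Gamma(1,g)\leq R'$. Since $\Gamma$ is locally finite, the $R'$-ball about $1$ is finite, so only finitely many $g$ can satisfy $d_{C^{(1)}}(v_0,gv_0)=\#(1,g)\leq R$. Properness follows.

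I do not expect any serious obstacle here. The substantive content, namely the identification of walls with hyperplanes and the formula relating combinatorial distance on $C^{(1)}$ to the number of separating walls, is already built into Sageev's construction. The only point requiring minor care is the reduction of properness on all of $C$ to the $0$-skeleton statement, which is routine and could be cited rather than redone.
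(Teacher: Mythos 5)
Your argument is correct and is exactly the standard one: the orbit of the principal $0$-cube based at $1$ satisfies $d_{C^{(1)}}(v_0,gv_0)=\#(1,g)$, and metric properness of this orbit map gives properness of the action. Note that the paper itself gives no proof of this proposition, citing \cite{HruskaWiseAxioms} instead, and your write-up matches that reference's approach; the only points you assert rather than prove (finiteness of $\#(1,g)$ for the principal orientation to be a legitimate $0$-cube, and the reduction of properness to the $0$-skeleton) are indeed standard in this setting and reasonable to cite.
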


An alternative to Proposition~\ref{prop:linear separation} is the following:
\begin{prop}\label{prop:axis separation}
Let $H_1,\dots, H_k$ be a collection of quasiconvex codimension-1 subgroups of
the word-hyperbolic group $G$.

Suppose that for each infinite order element $g$ of $G$,
there is a translate $fN_i$ such that
$g^{-n}fN_i$ and $g^nfN_i$ are separated by $fN_i$ for some $n$
in the sense that the corresponding partitions are nested:
$$f \overleftarrow N_i \subset g^{\pm n} f\overleftarrow N_i \ \ \ \mbox{ and } \ \ \ f \overrightarrow N_i \subset g^{\mp n} f\overrightarrow N_i .$$
Then $G$ acts properly on the dual CAT(0) cube complex.
\end{prop}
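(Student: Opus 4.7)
The plan is to show that no infinite-order element of $G$ fixes any 0-cube of $C$. Granting this, every 0-cube stabilizer is torsion, and since $G$ is word-hyperbolic its torsion subgroups are uniformly finite; combined with the cocompactness from Proposition~\ref{prop:sageev compactness}, the action is then properly discontinuous.

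Fix an infinite-order $g \in G$, and let $fN_i$ together with the integer $n$ be as in the hypothesis. After possibly interchanging $\overleftarrow N_i$ and $\overrightarrow N_i$, I may assume $f \overleftarrow N_i \subsetneq g^n f \overleftarrow N_i$. Writing $W_k := g^{kn} fN_i$, equivariance under $g^n$ gives the strict chain $W_k^\leftarrow \subsetneq W_{k+1}^\leftarrow$ for every $k \in \integers$. Under Sageev's correspondence between walls in $\Gamma$ and hyperplanes in $C$, this produces an infinite family $\{\hat W_k\}_{k \in \integers}$ of pairwise disjoint hyperplanes in $C$ with strictly nested halfspaces, on which $g^n$ acts by shifting the indexing by one. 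Equivalently, $g^n$ \emph{skewers} the hyperplane $\hat W_0$.

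I then invoke the standard fact that any cubical isometry of a CAT(0) cube complex which skewers some hyperplane acts loxodromically on the complex, with a combinatorial axis crossing the chain $\{\hat W_k\}_{k \in \integers}$ transversally in order. In particular $g^n$ has no fixed 0-cube. Since $gv = v$ would force $g^n v = v$, the element $g$ itself also fixes no 0-cube of $C$, completing the reduction initiated in the first paragraph.

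The principal subtlety in this approach is the passage from skewering to an honest loxodromic action: concretely, one constructs a combinatorial axis from the chain $\{\hat W_k\}$ by producing consecutive 0-cubes along it (obtained by flipping the $W_k$-halfspace one index at a time), and verifying that these are joined by 1-cubes and satisfy the ultrafilter consistency conditions of Sageev's construction. This is well known in the cube complex literature, and once it is granted the rest of the argument is a routine bookkeeping exercise.
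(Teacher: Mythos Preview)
Your argument is correct and shares the same overall skeleton as the paper's sketch: reduce via cocompactness (Proposition~\ref{prop:sageev compactness}) to showing that 0-cube stabilizers are finite, and then show that an infinite-order $g$ fixing a 0-cube $v$ yields a contradiction from the $g^n$-shifted nested chain of walls.

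The only genuine divergence is at the final step. You pass through the general principle ``skewering implies loxodromic,'' which requires building (or citing) a combinatorial axis. The paper avoids this entirely: since $g^n$ fixes $v$, all walls $g^{nr}fN_i$ must be assigned the same orientation in the 0-cube $v$; but the chain is strictly nested, so traveling in one direction along it produces infinitely many walls whose chosen halfspace does not contain $1\in G$, contradicting the defining finiteness condition of a 0-cube. This is a two-line argument directly from Sageev's definition and needs no auxiliary facts about loxodromic isometries or axis construction. Your route is valid but heavier than necessary; the paper's is the more economical one to internalize here. (Also, your appeal to a uniform torsion bound in hyperbolic groups is not needed: cocompactness plus finite point stabilizers already gives properness.)
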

\begin{proof}[Sketch]
By Proposition~\ref{prop:sageev compactness}, $G$ acts cocompactly,
so it suffices to show that the stabilizer of each 0-cube of $C$ is finite.
If an infinite order element $g$ fixes a 0-cube $v$ of $C$ then $g^n$ would fix $v$ for each $n$.
But then the sequence $\{g^{nr}fN_i: r\in \integers\}$ is shifted by $g^n$ and so the walls in this sequence
would all face in the same direction.
By traveling in one or the other direction of this infinite sequence, we see that there are infinitely many walls
that do not face $1\in G$, which contradicts that $v$ is a 0-cube.
\end{proof}

We have found that in many cases it is difficult and sometimes quite messy to directly verify Proposition~\ref{prop:linear separation}~or~\ref{prop:axis separation}.
Moreover, in many cases, when there is a profusion of available codimension-1 subgroups
it is desirable to choose them in a flexible enough way so that there are sufficiently many to satisfy the properness criterion of
 Proposition~\ref{prop:axis separation}, while still keeping a finite number so that Proposition~\ref{prop:sageev compactness} for cocompactness is satisfied.

We propose the following criterion which is our main result:
\begin{thm}\label{thm:soft separation}
Let $G$ be word-hyperbolic.
Suppose that for each pair of distinct points $(u,v) \in (\boundary G)^2$
there exists a quasiconvex codimension-1 subgroup $H$
such that $u$ and $v$ lie in distinct
components of $\boundary G-\boundary H$.

Then there is a finite collection $H_1,\dots, H_k$ of quasiconvex codimension-1 subgroups
such that $G$ acts properly and cocompactly on the resulting dual CAT(0) cube complex.
\end{thm}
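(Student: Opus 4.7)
The plan is to invoke the hypothesis pointwise on pairs of boundary points and then promote it to a finite list by exploiting a compactness property of the $G$-action on the space $\partial^2 G := (\partial G)^2 \setminus \Delta$ of distinct pairs.

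First, for each $(u,v) \in \partial^2 G$ the hypothesis supplies a quasiconvex codimension-1 subgroup $H_{u,v}$ separating $u$ from $v$ at the boundary. The limit set $\partial H_{u,v}$ is closed in $\partial G$ by quasiconvexity, and I would verify that $\partial G - \partial H_{u,v}$ has only finitely many components, so that each is clopen in $\partial G - \partial H_{u,v}$ and hence open in $\partial G$. This yields an open product neighborhood $U_{u,v} \times V_{u,v} \subset \partial^2 G$ of $(u,v)$ every pair of which is still separated at the boundary by $H_{u,v}$.

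Next, there is a compact set $K \subset \partial^2 G$ with $G \cdot K = \partial^2 G$: every distinct pair is the endpoint pair of a bi-infinite quasi-geodesic in $\Gamma$ that can be $G$-translated to pass within a bounded distance of $1 \in \Gamma$, and this bounded-distance constraint forces uniform separation from the diagonal, making the set of such pairs compact in $\partial^2 G$. Covering $K$ by finitely many boxes $U_j \times V_j$ from the first step yields a finite list $H_1, \ldots, H_k$ of quasiconvex codimension-1 subgroups such that for every $(u,v) \in \partial^2 G$ some translated wall $gN_j$ separates $u$ from $v$ at the boundary.

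Finally, I would verify properness via Proposition~\ref{prop:axis separation}. Given an infinite-order $g \in G$ with boundary fixed points $g^\pm$, the previous step provides a translated wall $fN_j$ whose partition of $\partial G$ splits $g^-$ from $g^+$. The north-south dynamics of $g$ on $\partial G$ then sweep $\partial(fH_j)$ arbitrarily close to $g^+$ under high positive powers of $g$ and close to $g^-$ under high negative powers, which (via the halfspace-to-limit-set correspondence together with quasiconvexity of $H_j$) delivers the nesting $f\overleftarrow N_j \subset g^n f\overleftarrow N_j$ and $f\overrightarrow N_j \subset g^{-n} f\overrightarrow N_j$ required by the proposition. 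Cocompactness of the cubical action is then immediate from Proposition~\ref{prop:sageev compactness}.

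The step I expect to require the most care is the openness claim in the first paragraph, since $\partial G$ need not be locally connected, so openness of components of $\partial G - \partial H$ in $\partial G$ is a genuine input rather than a formality. I would address it through the halfspace-to-limit-set dictionary for quasiconvex codimension-1 subgroups of a word-hyperbolic group: the limit sets of the two halfspaces of the wall associated to $H$ are each open in $\partial G$ with common topological boundary $\partial H$, which both bounds the number of components of $\partial G - \partial H$ and supplies the openness. This same dictionary also underwrites the north-south nesting argument in the third paragraph.
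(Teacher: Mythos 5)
Your overall route is sound and genuinely different from the paper's in one structural respect: you extract the finite family by covering a compact set in the space of distinct pairs of boundary points, proving cocompactness of the $G$-action on that pair space directly (pairs spanning geodesics through a bounded neighborhood of $1\in\Gamma$ have bounded Gromov product, hence stay uniformly away from the diagonal), whereas the paper works in the space $T$ of distinct \emph{triples}, where cocompactness is precisely the standard ``uniform convergence group'' property of a hyperbolic group acting on its boundary; the paper covers $T$ by $G$-saturated product neighborhoods (refined via Lemma~\ref{Properness}) and extracts a finite subcover from compactness of $G\backslash T$. Both compactness arguments are valid; yours is somewhat more elementary in the hyperbolic case, while the triple-space formulation is the one the paper can push through to the relatively hyperbolic setting later on. The endgame is the same in both: cocompactness from Proposition~\ref{prop:sageev compactness}, and properness by verifying the axis-separation condition of Proposition~\ref{prop:axis separation} via convergence dynamics.

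Two steps need repair or more care. First, your opening claim that $\partial G-\partial H$ has finitely many components, so that components are clopen and hence open, is false in general: when $\partial G$ is a Cantor set (e.g.\ $G$ free) the complement of $\partial H$ is totally disconnected, with uncountably many components, none of them open. The correct mechanism is the one you fall back on at the end, and the one the paper uses implicitly: for a wall $\{\overleftarrow N,\overrightarrow N\}$ built from a finite neighborhood $N$ of the quasiconvex subgroup $H$, the sets $\Lambda(\overrightarrow N)\setminus\partial H$ and $\Lambda(\overleftarrow N)\setminus\partial H$ are disjoint, open, and cover $\partial G-\partial H$ (the two halfspace limit sets meet only in $\partial H$, by quasiconvexity); it is these two open sets, not the connected components, that furnish the boxes $U\times V$. (The halfspace limit sets themselves are closed; nothing bounds the number of components.) You must also ensure that $u$ and $v$ lie in the limit sets of \emph{different} halfspaces for a suitable choice of $N$ depending on the pair, since the hypothesis only places them in different components of $\partial G-\partial H$ --- a point the paper also passes over quickly. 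Second, in the nesting step you push only $\partial(fH_j)$ by north--south dynamics on $\partial G$; to obtain the set-level containments of halfspaces in $\Gamma$ demanded by Proposition~\ref{prop:axis separation}, it is cleaner (and is what the paper does) to use the convergence action on $X=\Gamma\cup\partial G$ and push the entire compact set $fN_j\cup\partial(fH_j)$, which misses both fixed points of $g$, wholly into one side of the wall under high powers of $g$; deducing the nesting from boundary data alone requires an additional quasiconvexity/hull argument that you only gesture at.
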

It is clear how the hypothesis of Theorem~\ref{thm:soft separation} relates
to Proposition~\ref{prop:axis separation}.
Indeed, let $g$ be an infinite order element in $G$. According to Theorem~\ref{thm:soft separation}
there exists a quasiconvex codimension-1 subgroup $H$
such that the attracting and repelling limit points $g^{\pm\infty}$  lie in distinct
components of $\boundary G-\boundary H$.
The iterates $g^n\boundary H$ form a nested sequence in $\boundary G$,
and hence so do the corresponding walls for large multiples of~$n$.

We prove Theorem~\ref{thm:soft separation} in \S\ref{sec:2}. The notion of virtual specialness is briefly recalled in \S\ref{sec:3}.

As an application we revisit our earlier work towards the cubulation of arithmetic lattices in \S\ref{sec:5}.
We also build upon a fundamental new result of Kahn-Markovic to see
that for every closed hyperbolic 3-manifold $M$, the group $\pi_1M$ is cubulated, see \S\ref{sec:6}.
It follows that if $\pi_1M$ is subgroup separable then $\pi_1M$ is virtually special
and hence, $M$ is virtually fibered by Agol's criterion. With some more work this holds under the milder assumption that
the quasi-fuchsian surface subgroups are separable. A much more elaborate proof that $\pi_1M$ is subgroup separable
and virtually special is given in \cite{WiseIsraelHierarchy} when $M$ has a geometrically finite incompressible surface. Finally we extend Theorem~\ref{thm:soft separation} to the relatively hyperbolic
setting in the last section.

Over the last years, lectures on special cube complexes have included the explanation
that the existence of sufficiently many separable surface subgroups implies
the virtual specialness of hyperbolic 3-manifold groups.
It is satisfying to record this in writing in view of the results of Kahn-Markovic.

Since a first version of this paper was circulated,
\begin{com} Perhaps we can remove the initial part of this sentence for the Archive and or final version.
If there is a reference to his thesis (i imagine there will be one before acceptance, then we add it here)\end{com}
 Guillaume Dufour has informed us that, motivated by the cubulation of compact hyperbolic 3-manifolds, he has
obtained a criterion similar to Theorem~\ref{thm:soft separation} in the case of cocompact lattices
of $\hyperbolic^n$. \begin{com} We should get a title from him \end{com}

\section{Soft Separation}\label{sec:2}
Let $X$ be a compact metrizable space. A group $G$ acts by homeomorphisms on $X$ as a \emph{convergence group}
if it acts properly discontinuously on the space of pairwise distinct triples in $X$ (see below).
Equivalently, $G$ acts as a convergence group
if for every sequence $(g_n)_n$ in $G$, there exists a subsequence $(g_{n_k})_k$ and $a,b$ in $X$
such that the sequence $(g_{n_k})_k$ converges uniformly on compact subsets of $X-\{a \}$
to the point $b$. The group $G$ is a \emph{uniform convergence group}
 if the action on the space of triples is also cocompact. In that case, $X$ is unique up to equivariant homeomorphism.
 If $G$ acts properly discontinuously on some locally compact Hausdorff space $Y$,
 then there is a unique compactification $Y\cup X$ of $Y$ that is natural for the action of $G$.

Bowditch proved that any properly discontinuous group action on a locally compact $\delta$-hyperbolic metric
space extends to a convergence group action on the boundary \cite{Bowditch99}.

Let $G$ be a word-hyperbolic group. Then $G$ acts properly discontinuously and cocompactly
on its cayley graph $\Gamma$ and we may identify the Gromov boundary $\boundary \Gamma$ with $\boundary G$
(see \cite{GhysBook90} for more details). Note that $X=\Gamma \cup \boundary \Gamma$ has a natural compact topology
and is metrizable but has no preferred metric. The group $G$ acts as
a convergence group on $X$ and as a uniform convergence
group on $\boundary G$, see \cite[Proposition 1.12 and 1.13]{Bowditch99}.
That this last property characterizes word-hyperbolic groups was a longstanding open problem until it was
resolved by Bowditch.

\emph{Triple space} is the subspace $T \subset (\boundary G)^3$ consisting of pairwise
distinct triples of points $\{ (x,y,z) \in (\boundary G)^3 \; : \; x \neq y \neq z \neq x \}$.
Since $G$ acts as a convergence group on $\boundary G$, it acts properly on $T$. Here the action
is the restriction of the diagonal action $g(x,y,z) = (gx, gy , gz)$ induced by the action of $G$ on
$\boundary G$. It easily follows that:

\begin{lem} \label{Properness}
For each neighborhood $M'$ of $(u,v,w)\in T$ there is a subneighborhood $M\subset M'$ containing $(u,v,w)$
such that $gM\cap M=M$ for some finite subgroup of $G$, and otherwise
$gM\cap M=\emptyset$.
\end{lem}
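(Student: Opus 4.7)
The plan is to use the properness of the $G$-action on the triple space $T$ together with the fact that $T$ is Hausdorff. Let $F$ denote the stabilizer in $G$ of the point $(u,v,w)$. Since $G$ acts properly on $T$, the group $F$ is finite, and we expect $F$ to play the role of the finite subgroup in the conclusion.

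First I would shrink $M'$ slightly to arrange that the set $S = \{g \in G : gM_0 \cap M_0 \neq \emptyset\}$ is finite; this is precisely properness of the action on $T$, applied to the relatively compact neighborhood $\overline{M_0} \subset T$ of $(u,v,w)$. Clearly $F \subseteq S$. Next, for each $g \in S \setminus F$ we have $g(u,v,w) \neq (u,v,w)$, so by the Hausdorff property of $T$ we can find open neighborhoods $A_g$ of $(u,v,w)$ and $B_g$ of $g(u,v,w)$ with $A_g \cap B_g = \emptyset$; setting $U_g = A_g \cap g^{-1}B_g$ gives a neighborhood of $(u,v,w)$ satisfying $gU_g \cap U_g = \emptyset$. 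Define
\[
M_1 \ = \ M_0 \cap \bigcap_{g \in S \setminus F} U_g.
\]
This is an open neighborhood of $(u,v,w)$ inside $M'$ for which $gM_1 \cap M_1 = \emptyset$ whenever $g \notin F$.

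Finally, to enforce the identity $gM \cap M = M$ (equivalently $gM = M$) for every $g \in F$, I would symmetrize: set
\[
M \ = \ \bigcap_{h \in F} h M_1.
\]
Because $F$ is finite and each $hM_1$ is an open neighborhood of $h(u,v,w) = (u,v,w)$, the intersection $M$ is still an open neighborhood of $(u,v,w)$ contained in $M_1 \subseteq M'$. For any $g \in F$, the change of variables $h \mapsto gh$ on $F$ gives $gM = \bigcap_{h \in F} gh\, M_1 = M$. For any $g \notin F$, the containments $M \subseteq M_1$ and $gM \subseteq gM_1$ give $gM \cap M \subseteq gM_1 \cap M_1 = \emptyset$ by construction of $M_1$.

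The proof is essentially a soft unwinding of the definition of properly discontinuous action, so I do not anticipate a real obstacle; the only subtlety is making sure the $F$-invariance step does not destroy the disjointness achieved for elements outside $F$, which is handled by the monotonicity $M \subseteq M_1$ in the final intersection argument.
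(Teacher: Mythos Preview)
Your proof is correct and follows essentially the same strategy as the paper's: use properness of the $G$-action on $T$ to cut down to a finite set of group elements, then take a finite intersection of translates to produce the desired $F$-invariant subneighborhood. The paper's version is terser---it simply intersects $\cap_{k\in K}kM''$ over the finite set $K=\{g:(u,v,w)\in gM''\}$ and observes that the resulting set of relevant $g$'s is then a subgroup---whereas you separate the two steps more explicitly, first using Hausdorffness to eliminate elements of $S\setminus F$ and then symmetrizing over the stabilizer $F$; but the underlying idea is the same.
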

\begin{proof}
By properness we can find $M'' \subset M'$ such that the collection of elements
$K = \{ g\in G \; : \; (u,v,w)\in gM''\cap M'' \neq \emptyset \}$ is finite.
We can assume that $K$ is a subgroup, since otherwise we could obtain a smaller collection by using the new
neighborhood $M=\cap_{k\in K} kN''$.
Moreover, each element of $k$ maps $M$ to itself homeomorphically.
\end{proof}

\medskip

\begin{proof}[Proof of Theorem~\ref{thm:soft separation}]
For each pair of distinct points $u,v \in \boundary G$, by hypothesis there is a quasiconvex codimension-1 subgroup $H$
such that $u$ and $v$ are separated by $\boundary H \subset \boundary G$.
Let $U_H,V_H$ be the two regions of $\boundary G-\boundary H$ so $\boundary H$ separates $U_H,V_H$.
 We let $N$ be a finite neighborhood of $H$ such that
$\Gamma - N$ contains at least two deep components.
 Note that $\boundary N$ equals $\boundary H$.
  Let $\overrightarrow N$ be the component of $\Gamma-N$ such that $u \in \boundary \overrightarrow N$.
  Let $\overleftarrow N$ equal $\Gamma-\overleftarrow N$.
For each $j\in G$ we define the translated wall associated to $jH$
to be the partition $\{j \overrightarrow N , j \overleftarrow N\}$.

Let\footnote{Note that $T$ is empty when $G$ is elementary
in which case the theorem is trivial.} $w \in \boundary G - \{u,v\}$. By Hausdorffness of $\boundary G$,
there exists an open neighborhood $W'$ of $w$ that is disjoint from open neighborhoods of
$u$ and $v$.
We let $U'=U_H-\overline{W'}$ and $V'=V_H-\overline{W'}$, so that $M=U' \times V ' \times W' \subset T$.
Then $M'=U'\times V'\times W'$ is an open neighborhood of $(u,v,w)$ such that
$U',V'$ are separated by $\boundary H$. By Lemma~\ref{Properness} we can refine this to an open neighborhood $M=U\times V\times W$ of $(u,v,w)$ such that:
\begin{enumerate}
\item $U$ and $V$ are separated by $\boundary H$
\item $gM\cap M=M$ for those $g$ in some finite subgroup of $G$, and otherwise $gM\cap M=\emptyset$.
\end{enumerate}
Observe that $GM$ is saturated with respect to the quotient $T\rightarrow \bar T = G \backslash T$.

The above construction applies to each point $(u,v,w) \in T$
and we shall use the explicit notation $M_{(u,v,w)}$ for the neighborhood $M$ chosen above.

Consider the following collection of open saturated neighborhoods:
\begin{eqnarray} \label{collection1}
\big\{ \cup _{ j\in G} jM_{(u,v,w)} \; : \;   (u,v,w) \in T \big\}.
\end{eqnarray}
The collection forms an open covering of $T$.

Since $G$ acts as a uniform convergence group on $\boundary G$ the quotient
$\bar T$ is compact. Since each element of collection~(\ref{collection1}) is saturated with respect to $T\rightarrow \bar T$,
the compactness of $\bar T$ assures that there is a finite subcollection that also covers $T$.

For each infinite order $g\in G$ consider a point $(g^{+\infty}, g^{-\infty}, w') \in T$.
We have shown that $(g^{+\infty}, g^{-\infty}, w')$ lies in one of the sets of our finite subcollection,
so in particular, $(g^{+\infty}, g^{-\infty}, w')$ must lie in some $jM_{(u,v,w)}$ associated to some point
$(u,v,w) \in T$ as above,
where $M_{(u,v,w)}=U_{(u,v,w)}\times V_{(u,v,w)}\times W_{(u,v,w)}$
and where the associated quasiconvex codimension-1 subgroup $H_{(u,v,w)}$ has the property that $\boundary H_{(u,v,w)}$ separates $U_{(u,v,w)}$ from $V_{(u,v,w)}$.
Thus  $g^{+\infty}\in jU_{(u,v,w)}$ and $g^{-\infty}\in jV_{(u,v,w)}$ and these are separated by
$j\boundary H_{(u,v,w)}$.

Now $G$ acts as a convergence group on $X$ and $N_{(u,v,w)} \cup \boundary H_{(u,v,w)}$ is a compact subset of $X-\{ j g^{-\infty} \}$. This implies that $jg^nj^{-1}$ converges to $jg^{+\infty}$
uniformly on $N_{(u,v,w)} \cup \boundary H_{(u,v,w)}$. Thus the element $jgj^{-1}$ is
``separated'' by the wall associated to $N_{(u,v,w)}$, equivalently the element $g$ is separated by
$jN_{(u,v,w)}$. We note that $\overrightarrow N_{(u,v,w)}$
and $\overleftarrow N_{(u,v,w)}$ are both deep since they respectively contain
$g^{+\infty}$ and $g^{-\infty}$ in their boundaries.
\end{proof}

\begin{com}The following cautionary remark prepares the reader for the nonuniform case.
 Connectedness at infinity is surprisingly not used in the uniform case.
This suggests that there may be a slightly better approach in the proof of Theorem~\ref{Thm:last}
perhaps avoiding the dichotomy between loxodromic elements that stabilize or don't stabilize a component of the boundary.

The remark unifies (but doesn't really enlarge) the range of applications.\end{com}
\begin{rem}\label{rem:emptyset separation}
When $H$ is a finite codimension-1 subgroup of $G$, then $\boundary H=\emptyset$,
and $\boundary G$ is already disconnected.
We can regard $\boundary G$ as being ``separated'' by $\boundary H$
by letting the two parts consist of $\boundary \overleftarrow N$ and $\boundary \overrightarrow N$,
where $\Gamma = \overleftarrow N \sqcup \overrightarrow N$ is a partition into a pair of deep $H$-invariant subspaces.
We will revisit this point in \S\ref{sec:nonuniform case} where connectivity of $\boundary G$
arises in the nonuniform generalization of Theorem~\ref{thm:soft separation}.
\end{rem}

\section{Virtual Specialness}\label{sec:3}
A nonpositively curved cube complex is \emph{special} if it admits a local isometry to
the cube complex associated with a right-angled artin group.
Special cube complexes were introduced in \cite{HaglundWiseSpecial} where they were initially defined
in terms of illegal configurations of immersed hyperplanes.
Recall that a \emph{hyperplane} in a CAT(0) cube complex is
a connected separating subspace consisting
of a maximally extending sequence of ``midcubes'' each of which cuts its ambient cube in half.
For each hyperplane $\widetilde D \subset \widetilde C$
in a CAT(0) cube complex $\widetilde C$, one obtains an \emph{immersed hyperplane} $D\rightarrow C$
where $D=\stabilizer_{\pi_1C}(\widetilde D) \backslash \widetilde D$.

Some of the most important properties of a special cube complex are \cite{HaglundWiseSpecial}:
\begin{prop}\label{prop:special properties}
Let $C$ be a special cube complex. Then
\begin{enumerate}
\item $\pi_1C$ embeds in a right-angled artin group.
\item Thus $\pi_1C$ is residually torsion-free nilpotent.
\item If $C$ is compact and $\pi_1C$ is word-hyperbolic then
every quasiconvex subgroup $H$ of $\pi_1C$ is separable.
\end{enumerate}
\end{prop}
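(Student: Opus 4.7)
The plan is to address the three claims in order, with (1) feeding into (2), and (3) being substantially harder.

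For (1), I would unpack the definition: specialness provides a local isometry $\phi\colon C\to S_A$ of nonpositively curved cube complexes, where $S_A$ is the Salvetti complex of a right-angled artin group $A$. Lifting $\phi$ to universal covers yields a map $\widetilde\phi\colon\widetilde C\to\widetilde S_A$ between CAT(0) cube complexes which is again a local isometry. The standard argument is that a local isometry between CAT(0) cube complexes is a globally isometric embedding onto a convex subcomplex: local injectivity comes from the link condition, and CAT(0) geometry upgrades local to global since geodesics are unique, so no two distinct points of $\widetilde C$ can be identified. This yields $\pi_1 C\hookrightarrow\pi_1 S_A=A$.

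Claim (2) is then a pure group-theoretic consequence of (1), since residual torsion-free nilpotence is inherited by subgroups. I would cite the classical fact that any right-angled artin group is residually torsion-free nilpotent, proved via the Magnus-style expansion embedding $A$ into the units of the noncommutative power series ring with one indeterminate per generator and with appropriate commutation relations; the truncations realize $A$ as a subdirect product of torsion-free nilpotent groups.

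Part (3) is the main obstacle and I would invoke the Haglund-Wise canonical completion and retraction. The first step is to realize a quasiconvex subgroup $H\le\pi_1 C$ by a compact combinatorial object: because $\pi_1 C$ is word-hyperbolic and acts properly and cocompactly on the CAT(0) cube complex $\widetilde C$, the subgroup $H$ stabilizes a combinatorially convex subcomplex $\widetilde Y\subset\widetilde C$ (for instance the cubical convex hull of an $H$-orbit), and $H\backslash\widetilde Y=Y$ is compact with $Y\to C$ a local isometry. The second and crucial step is canonical completion: specialness allows one to construct a finite-sheeted cover $\widehat C\to C$ together with a lift $Y\hookrightarrow\widehat C$ that is a cubical retract. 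The retraction is built one hyperplane family at a time, using the absence of self-intersections and one-sided or inter-osculating hyperplanes (the illegal configurations forbidden by specialness) to consistently choose sheets over neighborhoods of hyperplanes of $C$ crossing $Y$.

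Given such a retract, $H=\pi_1 Y$ is a retract of the finite-index subgroup $\pi_1\widehat C$ of $\pi_1 C$, hence closed in the profinite topology on $\pi_1\widehat C$ and therefore on $\pi_1 C$. The hardest technical ingredient in this plan is verifying that canonical completion terminates and produces a finite cover, which is where compactness of $C$, quasiconvexity of $H$ (yielding compactness of $Y$), and the special hyperplane combinatorics of $C$ are all used in concert.
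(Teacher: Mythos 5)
Your sketch is essentially the argument of the paper's source: the paper does not prove Proposition~\ref{prop:special properties} itself but quotes it from \cite{HaglundWiseSpecial}, and your outline follows exactly that reference's route --- lifting the local isometry to the Salvetti complex to a convex embedding of universal covers for (1), Droms' residual torsion-free nilpotence of right-angled artin groups for (2), and quasiconvex cores plus the canonical completion and retraction for (3), which is precisely the virtual retraction recorded in Remark~\ref{R1}. The only step you leave tacit is that a retract $H$ of the finite-index subgroup $\pi_1\widehat C$ is closed in its profinite topology because $\pi_1\widehat C$ is residually finite (which you already have from (1), since right-angled artin groups are residually finite), after which separability passes from the finite-index subgroup to $\pi_1 C$ in the standard way.
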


\begin{rem} \label{R1}
The proof of Proposition~\ref{prop:special properties}.(3) produces a finite index subgroup $V$ of $\pi_1 C$ that \emph{retracts}
onto $H$, see \cite[Theorem 7.3]{HaglundWiseSpecial}. This means that $H$ is contained in $V$ and  there is a homomorphism $\rho :V \rightarrow H$ whose restriction to $H$ is the identity.
\end{rem}

A cube complex is \emph{virtually special} if it has a special finite cover. Likewise
a group $G$ is \emph{virtually [compact] special} if $G$ has a finite index subgroup that acts freely [and cocompactly]
 on a CAT(0) cube complex $C$ with special quotient.
 \footnote{We shall more generally say that a group $G$ \emph{virtually} has a property
P if there is a finite index subgroup
of $G$ having property P.}
Not every nonpositively curved cube complex is virtually special,
but the following criteria hold:
\begin{prop}\label{prop:virtually special criteria}
\begin{enumerate}
\item \label{criterion:double} A nonpositively curved cube complex with finitely many immersed hyperplanes is virtually special
if and only if $\pi_1D\pi_1E$ is separable for each pair of crossing immersed hyperplanes in $C$.
\item \label{criterion:quasiconvex}  A compact nonpositively curved cube complex with word-hyperbolic $\pi_1$ is virtually special if each quasiconvex subgroup is separable.
\item \label{criterion:single} A compact nonpositively curved cube complex with word-hyperbolic $\pi_1$ is virtually special if $\pi_1D$ is separable for each immersed hyperplane.
\item \label{criterion:embedded} A compact nonpositively curved cube complex with word-hyperbolic $\pi_1$ is virtually special if each immersed hyperplane is embedded.
\end{enumerate}
\end{prop}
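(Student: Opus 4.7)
The approach is to reduce all four criteria to the fundamental Haglund-Wise characterization of virtual specialness in terms of the four illegal hyperplane configurations: self-intersection, one-sidedness, self-osculation, and inter-osculation. A nonpositively curved cube complex is special precisely when none of these pathologies occurs among its immersed hyperplanes, and virtual specialness amounts to finding a finite cover in which all four are resolved simultaneously.

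Criterion~(\ref{criterion:double}) is essentially the basic Haglund-Wise theorem: a compact NPC cube complex is virtually special iff each hyperplane subgroup $\pi_1 D$ is separable in $\pi_1 C$ and each crossing double coset $\pi_1 D\cdot \pi_1 E$ is separable. Each instance of an illegal configuration corresponds to an element $g\in \pi_1 C$ lying outside a relevant subgroup or double coset; separability produces a finite-index subgroup avoiding $g$, whose associated cover resolves that instance. Since there are finitely many hyperplanes and hence finitely many pathology instances, the resulting finite covers combine into a single special finite cover. Criterion~(\ref{criterion:quasiconvex}) follows immediately: hyperplane subgroups in a compact NPC cube complex with word-hyperbolic $\pi_1$ are quasiconvex, and by Minasyan's theorem on double coset separability in word-hyperbolic groups whose quasiconvex subgroups are all separable, the hypotheses of~(\ref{criterion:double}) hold.

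For the remaining two criteria I argue (\ref{criterion:single})$\Rightarrow$(\ref{criterion:embedded})$\Rightarrow$(\ref{criterion:double}). Under~(\ref{criterion:single}), each self-intersection of an immersed hyperplane $D\looparrowright C$ corresponds to an element $g\in \pi_1 C\setminus \pi_1 D$, and separability of $\pi_1 D$ yields a finite cover of $C$ in which that self-intersection is resolved; compactness forces finitely many such intersections, so a single finite cover embeds every hyperplane, giving~(\ref{criterion:embedded}). For~(\ref{criterion:embedded}), embeddedness rules out self-intersection, and passing to an appropriate double cover eliminates one-sidedness. Once the hyperplanes embed two-sidedly, the canonical completion and retraction construction of~\cite{HaglundWiseSpecial} applies: each $\pi_1 D$ becomes a virtual retract of $\pi_1 C$ in a suitable finite cover (hence separable), and analogous constructions yield separability of the relevant double cosets $\pi_1 D\cdot \pi_1 E$. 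Criterion~(\ref{criterion:double}) then applies and gives virtual specialness.

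The principal technical obstacle is the canonical completion and retraction machinery supporting~(\ref{criterion:embedded}), which leverages the embeddedness and two-sidedness of the hyperplanes to promote hyperplane subgroups to virtual retracts. The remaining implications are comparatively formal covering-space arguments, built on top of the basic Haglund-Wise theorem and Minasyan's double coset separability result.
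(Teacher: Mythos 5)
There is a genuine gap, and it sits exactly where the paper warns the difficulty lies. The paper gives no proof of this proposition at all: criteria~(1) and~(2) are quoted from \cite{HaglundWiseSpecial}, criterion~(3) is described as ``a more difficult criterion'' established in \cite{HaglundWiseAmalgams}, and criterion~(4) as ``a considerably deeper result'' established in \cite{WiseIsraelHierarchy}, whose proof itself depends on~(3). Your treatment of~(1) and~(2) is consistent with those sources (the reduction of~(2) to~(1) via quasiconvexity of hyperplane subgroups and double coset separability for quasiconvex subgroups of hyperbolic groups with separable quasiconvex subgroups is indeed the standard route). But your reductions (3)$\Rightarrow$(4)$\Rightarrow$(1) do not work as sketched.

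Concretely, the step ``once the hyperplanes embed two-sidedly, the canonical completion and retraction construction applies, so each $\pi_1D$ becomes a virtual retract and the relevant double cosets are separable'' is false as stated: canonical completion and retraction in \cite{HaglundWiseSpecial} requires a local isometry to a \emph{special} cube complex (e.g.\ the cube complex of a right-angled Artin group); it cannot be run merely from embeddedness and two-sidedness, since self-osculation and inter-osculation may persist and no separability comes for free. If that step were available, criterion~(4) would be an easy corollary of~(1), whereas in reality it requires the hierarchy and malnormal quotient machinery of \cite{WiseIsraelHierarchy} (cutting along the embedded hyperplanes and running an induction supported by combination theorems). Similarly, under the hypothesis of~(3), separability of each $\pi_1D$ does let one kill self-crossings and self-osculations of $D$ in finite covers, but it gives no control over inter-osculation; promoting single separability to the double coset condition of~(1) is precisely the nontrivial content of \cite{HaglundWiseAmalgams}, not a ``comparatively formal covering-space argument.'' As written, your proposal essentially assumes the two hardest criteria; for the purposes of this paper the correct move is simply to cite the three references, as the authors do.
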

Note that Criteria~(\ref{criterion:double})~and~(\ref{criterion:quasiconvex}) were obtained in \cite{HaglundWiseSpecial},
Criterion~(\ref{criterion:single}) is a more difficult criterion established in \cite{HaglundWiseAmalgams},
and Criterion~(\ref{criterion:embedded}) is a considerably deeper result (depending also on the proof of Criterion~(\ref{criterion:single}))
that is established in \cite{WiseIsraelHierarchy}.

\section{Cubulating Arithmetic Hyperbolic Lattices}
\label{sec:5}
In \cite{BergeronHaglundWiseSimple}~and~\cite{HaglundWiseAmalgams} we cubulated certain uniform hyperbolic lattices $G$
by showing that there is a finite family of regular hyperplanes $H_1,\dots, H_k$
such that the complement of their translates $\hyperbolic^d- GH_i$ consists of uniformly bounded pieces.
A similar approach works in the nonuniform case.
\footnote{Since the hyperplanes are regular it is then possible to deduce virtual specialness by one of several
options, e.g. double separablity (in general), or single separability and a virtually malnormal hierarchy (in the uniform case).}

\begin{thm}\label{thm:soft lattice cubulation}
Let $G$ be a uniform arithmetic hyperbolic lattice with a codimension-1 quasiconvex subgroup $H$.
Then $G$ acts properly and cocompactly on a CAT(0) cube complex.
\end{thm}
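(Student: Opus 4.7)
The plan is to deduce Theorem~\ref{thm:soft lattice cubulation} from the boundary criterion Theorem~\ref{thm:soft separation}, using Margulis's commensurator density theorem to manufacture, from the single subgroup $H$, enough codimension-$1$ quasiconvex subgroups of $G$ to separate every pair of distinct boundary points.

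Given $g\in\Commensurator_{\Isom(\hyperbolic^d)}(G)$, set $H(g):=G\cap gHg^{-1}$. Since $[gGg^{-1}:G\cap gGg^{-1}]<\infty$, the subgroup $H(g)$ has finite index in $gHg^{-1}$; hence $H(g)$ is a quasiconvex subgroup of $G$ with limit set $g\cdot\boundary H$, and it is codimension-$1$ in $G$ whenever $g\cdot\boundary H$ separates $\boundary G=S^{d-1}$. Thus it suffices to produce, for each pair of distinct points $u,v\in S^{d-1}$, an element $g\in\Commensurator(G)$ such that $g\cdot\boundary H$ separates $u$ from $v$.

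Two inputs make this possible. First, because $H$ is codimension-$1$ quasiconvex in the uniform lattice $G$, its limit set $\boundary H$ separates $S^{d-1}$ into components $A$ and $B$; since $\Isom(\hyperbolic^d)$ acts transitively on ordered pairs of distinct boundary points, choosing $a\in A$ and $b\in B$ and picking $h\in\Isom(\hyperbolic^d)$ with $h(a)=u$ and $h(b)=v$ gives a translate $h\cdot\boundary H$ that separates $u$ from $v$. Second, the condition ``$g\cdot\boundary H$ separates $u$ from $v$'' is open in $g\in\Isom(\hyperbolic^d)$; the cleanest way to verify this is to pass to the closed convex hull $C$ of $\boundary H$ in $\overline{\hyperbolic^d}$ and observe that separation is equivalent to the complete geodesic from $u$ to $v$ meeting $gC$ transversely, which is manifestly an open condition on $g$. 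Since $G$ is arithmetic, Margulis's commensurator theorem asserts that $\Commensurator_{\Isom(\hyperbolic^d)}(G)$ is dense in $\Isom(\hyperbolic^d)$, so this open nonempty subset contains some $g\in\Commensurator(G)$. The resulting $H(g)$ is a codimension-$1$ quasiconvex subgroup of $G$ whose boundary separates $u$ from $v$, and Theorem~\ref{thm:soft separation} then supplies the desired proper cocompact action on a CAT(0) cube complex.

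The main obstacle I anticipate is the openness step: $\boundary H$ may well be a topologically wild subset of $S^{d-1}$, so naive Hausdorff-continuity arguments in the boundary alone are not available. Shifting perspective to the convex hull inside $\hyperbolic^d$, where hyperbolic convex geometry and transverse intersection of geodesics are robust under small perturbations of $g$, is what should make this step tractable.
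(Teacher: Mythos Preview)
Your proof is correct and follows the same route as the paper: use density of the commensurator (Margulis, via arithmeticity) to produce from $H$ a conjugate whose limit set separates any prescribed pair of boundary points, then apply Theorem~\ref{thm:soft separation}. The paper's phrasing dissolves your anticipated openness obstacle: rather than perturbing $\boundary H$, it fixes the open complementary regions $U,V\subset \boundary G-\boundary H$ and seeks $c\in\Commensurator(G)$ with $(cp,cq)\in U\times V$, which is manifestly an open condition on $c$ since $U$ and $V$ are open subsets of $S^{d-1}$---no convex-hull or transversality argument is needed, and the potential wildness of $\boundary H$ never enters.
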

\begin{proof}
Let $U,V$ be the components of $\boundary G$ separated by $\boundary H$.
For any two distinct points $p,q \in \boundary G$ there is an element $c$ of the commensurator of $G$
such that $cp\in U$ and $cq\in V$.
Note that $H_c = c^{-1}Hc \cap G$ is of finite index in $c^{-1}Hc$, and is thus itself a codimension-1 quasiconvex subgroup.
Observe that $\boundary H_c = c^{-1}\boundary H$ separates $p,q$
since $p\in c^{-1}U$ and $q\in c^{-1}V$.
We have thus satisfied the criterion of Theorem~\ref{thm:soft separation}.
\end{proof}

Let $F$ be a totally real number field, $(V,q)$ be a quadratic space over $F$,
of signature $(n,1)$ at one place and definite at all other places. Arithmetic lattices
$G \subset {\rm SO} (V , F)$ are of \emph{simple type}. Such a lattice is uniform
if and only if $(V,q)$ is anisotropic over $F$.

\begin{thm}\label{thm:soft simple lattice special}
Let $G$ be a uniform arithmetic hyperbolic lattice of simple type.
Then $G$ is virtually special.
\end{thm}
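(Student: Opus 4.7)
The plan is to combine Theorem~\ref{thm:soft lattice cubulation} with the single-separability criterion Proposition~\ref{prop:virtually special criteria}(\ref{criterion:single}). First I would produce a codimension-1 quasiconvex subgroup: writing $G\subset \mathrm{SO}(V,F)$ with $(V,q)$ anisotropic and of signature $(n,1)$ at the distinguished real place, any $v\in V$ with $q(v)>0$ at that place defines a codimension-1 $F$-subspace $W=v^{\perp}$ with $q|_W$ still anisotropic, of signature $(n-1,1)$ at the distinguished place and definite elsewhere. Thus $H := \mathrm{SO}(W,F)\cap G$ is a uniform arithmetic hyperbolic lattice of simple type that stabilizes a totally geodesic $\hyperbolic^{n-1}\subset\hyperbolic^n$; in particular $H$ is quasiconvex and codimension-1 in $G$. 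Theorem~\ref{thm:soft lattice cubulation} then produces a proper cocompact action of $G$ on a CAT(0) cube complex $C$.

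To upgrade this action to virtual specialness I would invoke Proposition~\ref{prop:virtually special criteria}(\ref{criterion:single}): since $G$ is word-hyperbolic and $G\backslash C$ is compact, it is enough to show that the stabilizer $\pi_1 D$ of every immersed hyperplane $D\subset G\backslash C$ is separable in $G$. By construction each such hyperplane stabilizer is commensurable with a conjugate of $H$, and is therefore itself a uniform arithmetic hyperbolic lattice of simple type, now in dimension $n-1$.

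Separability of these $\pi_1 D$ in $G$ is the technical core, and I would argue it by induction on $n$. The base case $n=2$ is the classical subgroup separability of closed Fuchsian groups. For the inductive step, the induction hypothesis applied to $H$ gives that $H$ is virtually compact special, hence in particular all its quasiconvex subgroups are separable in $H$ by Proposition~\ref{prop:special properties}(3). The key additional input is the \emph{regularity} of the family of $G$-translates of $H$ in the sense of \cite{BergeronHaglundWiseSimple}: after passing to a deep enough finite index subgroup of $G$, these translates can be arranged to form a virtually malnormal collection, and the hierarchy obtained by cutting along them fits the malnormal combination framework of \cite{HaglundWiseAmalgams}. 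This framework promotes the virtual specialness of the vertex groups (the hyperplane stabilizers, handled by induction) to separability of $\pi_1 D$ inside $G$, and a second application of Proposition~\ref{prop:virtually special criteria}(\ref{criterion:single}) then yields virtual specialness of $G$ itself.

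The hard part will be exactly this regularity/virtual malnormality step: one must arrange, after passing to a sufficiently small finite index subgroup, that totally geodesic hyperplanes arising from distinct $F$-rational codimension-1 anisotropic subspaces $W\subset V$ intersect in a sufficiently controlled way (in particular that the resulting hierarchy is virtually malnormal). This arithmetic/geometric input is essentially the heart of \cite{BergeronHaglundWiseSimple, HaglundWiseAmalgams}, and the footnote to Section~\ref{sec:5} signals that this \emph{single separability plus virtually malnormal hierarchy} strategy is the intended route in the uniform case; once regularity is in place, the induction and the application of Proposition~\ref{prop:virtually special criteria}(\ref{criterion:single}) are essentially formal.
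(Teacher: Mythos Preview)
Your route diverges from the paper's and contains a real gap.

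The paper's proof is short and does not use induction on the dimension at all. It applies Theorem~\ref{thm:soft lattice cubulation} with $H$ the stabilizer of a totally geodesic (``regular'') hyperplane, and then invokes the \emph{double} coset separability criterion, Proposition~\ref{prop:virtually special criteria}(\ref{criterion:double}). The required double coset separability of pairs of hyperplane stabilizers is an arithmetic fact, taken directly from \cite[Prop.~10 and proof of Thm.~6]{Bergeron2002}; no hierarchy and no inductive hypothesis on $H$ are needed. The footnote you cite lists ``single separability and a virtually malnormal hierarchy'' as an \emph{alternative} option, but there too single separability is an independent arithmetic input, not something manufactured from lower dimensions.

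Your inductive scheme does not close. Induction on $n$ tells you that $H$ is virtually compact special, hence quasiconvex subgroups of $H$ are separable \emph{in $H$}; it says nothing about separability of $H$ (or of $\pi_1 D$) \emph{in $G$}, which is what Proposition~\ref{prop:virtually special criteria}(\ref{criterion:single}) demands. You then appeal to the malnormal combination framework of \cite{HaglundWiseAmalgams} to ``promote'' this, but you have swapped edge groups and vertex groups: in the hierarchy obtained by cutting $G$ along translates of $H$, the hyperplane stabilizers are the \emph{edge} groups, while the vertex groups are $n$-dimensional pieces of $G$ that are not covered by your induction. The combination theorems take virtual specialness of the \emph{vertex} groups as input, so the information you have produced is on the wrong side of the splitting. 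The missing idea is precisely the one the paper supplies: separability (indeed double coset separability) of totally geodesic hyperplane stabilizers in simple-type arithmetic lattices is established directly by congruence/arithmetic arguments in \cite{Bergeron2002}, and once that is in hand criterion~(\ref{criterion:double}) finishes the proof immediately.
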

\begin{proof}
We follow the proof of Theorem~\ref{thm:soft lattice cubulation} using the codimension-1 subgroup
associated to a regular hyperplane.
The result is a proper cocompact action on a CAT(0) cube complex.
The fundamental groups of immersed hyperplanes of $\bar C = G\backslash C$
correspond to the stabilizers of the chosen regular hyperplanes.
The double coset separability criterion for virtual specialness thus holds by \cite[Prop~10 and proof of Thm~6]{Bergeron2002}.
See \cite{BergeronHaglundWiseSimple} for more details.

\end{proof}

\begin{rem}[Slight generalizations]
If $G$ is an arithmetic hyperbolic lattice with a separable quasiconvex codimension-1 subgroup
then the analogous proof shows that $G$ is virtually special, but Proposition~\ref{prop:virtually special criteria}.(\ref{criterion:single})
must be used.
Likewise, if $G$ is an arithmetic hyperbolic lattice with a quasiconvex codimension-1 subgroup
that doesn't self-cross its translates (on the boundary) then we obtain a quasiconvex hierarchy for $G$
which is thus virtually special using Proposition~\ref{prop:virtually special criteria}.(\ref{criterion:embedded}).
\end{rem}

We will give a nonuniform version of Theorem~\ref{thm:soft simple lattice special} in
holds as well.  Its proof uses Theorem~\ref{Thm:last} instead of Theorem~\ref{thm:soft separation}.

\begin{rem}[Complex hyperbolic lattices cannot be compactly cubulated] \label{rem:DG}
Delzant and Gromov proved that complex hyperbolic lattices
cannot have codimension-1 quasiconvex subgroups \cite{DelzantGromov05}.
Nevertheless, complex hyperbolic arithmetic lattices of simple type have positive virtual first Betti
number (see \cite{BorelWallach80}).
This yields codimension-1 subgroups. Most\footnote{In fact all of them except in dimension
$3$ and $7$.} real hyperbolic arithmetic lattices (even those that are not of simple type) embed as quasiconvex
subgroups of complex hyperbolic arithmetic lattices of simple type, see e.g. \cite{Lubotzky96}.
It is thus of considerable interest to know whether the codimension-1 subgroups in simple complex hyperbolic arithmetic lattices
yield quasiconvex codimension-1 subgroups of the corresponding real hyperbolic arithmetic lattices.
\end{rem}

\section{Cubulating Hyperbolic 3-manifolds}\label{sec:6}
Jeremy Kahn and Vladimir Markovic obtained the outstanding result that every hyperbolic 3-manifold
contains a closed quasi-fuchsian immersed surface \cite{KahnMarkovic09}.
In fact, they proved the following powerful:

\begin{prop}\label{prop:KM}
Let $M$ be a closed hyperbolic 3-manifold, and regard $\pi_1M$ as acting on $\hyperbolic^3\cong \widetilde M^3$.
For each great circle $C\subset \boundary \hyperbolic^3$ there is a sequence of immersed quasi-fuchsian surfaces
$F_i\rightarrow M$ such that $\boundary \widetilde F_i$ pointwise converges to $C$.
\end{prop}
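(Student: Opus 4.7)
The statement is Kahn--Markovic's main theorem, so the plan is to follow their construction. The core idea is to assemble each $F_i$ out of many nearly totally geodesic pairs of pants glued along their cuffs, with the geometry of the pants and gluings tuned so that the lift approximates a totally geodesic plane spanning $C$. Fix two parameters: a large ``length scale'' $R$ and a small tolerance $\epsilon>0$. Call a closed geodesic $\gamma\subset M$ an $(\epsilon,R)$-\emph{good curve} if its complex length is within $\epsilon$ of $2R$, and call an immersed pair of pants $P\to M$ an $(\epsilon,R)$-\emph{good pants} if each cuff is a good curve and the complex half-length at each cuff is within $\epsilon$ of $R$. The analytic input that runs the entire argument is the exponential mixing of the frame flow on the orthonormal frame bundle of $M$ (a consequence of Moore's theorem), which guarantees abundance and equidistribution of such objects.

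First I would establish the Kahn--Markovic equidistribution lemma: along each good curve $\gamma$, the ``feet'' of good pants abutting $\gamma$ on either side are $\epsilon$-equidistributed on the unit normal bundle of $\gamma$, with an error decaying exponentially in $R$. This is obtained by relating closed orbits of the frame flow to good pants via a connect-the-dots construction, then invoking exponential mixing to control the counting measure.

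Second, using this equidistribution I would solve a matching problem at each good curve: pair the good pants on one side of $\gamma$ with those on the other so that the composed gluing across $\gamma$ has complex shear within $\epsilon$ of $+1$ (the ``Hall's theorem / Sarnak-style'' matching that Kahn--Markovic carry out via Hall's marriage theorem applied to the equidistributed feet). The resulting closed surface $F=F(\epsilon,R)$ is built from $(\epsilon,R)$-good pants glued with $(\epsilon,R)$-good shear. The Kahn--Markovic quasi-Fuchsian lemma then asserts that $F\to M$ is $K(\epsilon)$-quasi-Fuchsian with $K(\epsilon)\to 1$ as $\epsilon\to 0$, because a union of nearly flat pants joined with near-identity shears piecewise $C^1$-approximates a totally geodesic immersion.

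Third, and this is where the prescribed great circle $C\subset\boundary\hyperbolic^3$ enters: seed the assembly with a good pair of pants whose chosen lift sits near the totally geodesic hyperbolic plane $\Pi_C$ bounded by $C$, and propagate the matching outward. Since every pants and every gluing is $\epsilon$-close to the totally geodesic model, the lift $\widetilde F_i$ remains in an $o(1)$-neighborhood of $\Pi_C$, and hence $\boundary\widetilde F_i$ lies in an $o(1)$-neighborhood of $C$ in $\boundary\hyperbolic^3$. Letting $\epsilon_i\to 0$ and $R_i\to\infty$ along a sequence yields quasi-Fuchsian immersed surfaces $F_i\to M$ with $\boundary\widetilde F_i\to C$ pointwise.

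The main obstacle is the first step: producing and equidistributing good pants is the deep technical heart of Kahn--Markovic and requires quantitative mixing of the frame flow together with delicate estimates on how closed geodesics can be joined by short orthogeodesic arcs. A secondary difficulty is ensuring that the matching step can be performed while preserving a chosen ``seed'' near $\Pi_C$; the robustness of Hall's theorem under perturbation of bipartite graphs handles this, but one must check that the seed contributes a negligible fraction of the total matching so that equidistribution is not destroyed by the constraint.
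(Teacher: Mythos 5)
This proposition is not proved in the paper at all: it is quoted as the main construction of Kahn--Markovic \cite{KahnMarkovic09}, so there is no internal argument to compare yours against. Your sketch does follow the genuine Kahn--Markovic architecture (good curves and good pants of complex half-length near $R$, equidistribution of feet from exponential mixing of the frame flow, a Hall's-theorem matching realizing gluings with shear near $+1$, and a ``nearly geodesic implies quasi-Fuchsian'' theorem), so as an outline of the cited proof it is faithful in broad strokes.

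However, as an argument it has a genuine gap exactly at the step that carries the content of the proposition as stated here. You assert that ``since every pants and every gluing is $\epsilon$-close to the totally geodesic model, the lift $\widetilde F_i$ remains in an $o(1)$-neighborhood of $\Pi_C$, and hence $\boundary \widetilde F_i$ lies in an $o(1)$-neighborhood of $C$.'' This local-to-global claim does not follow from ``each piece is $\epsilon$-close'': the assembled surface contains on the order of exponentially many pants, and small bending/shear errors at cuffs can a priori accumulate over long distances in the surface, destroying any closeness to a single plane (and even $\pi_1$-injectivity). Preventing this accumulation is precisely the hard half of Kahn--Markovic: it requires the unit-shear condition in an essential way (matching precision on the order of $\epsilon/R$, not merely $\epsilon$, which is why exponentially small equidistribution error is needed), and the conclusion one extracts from their Theorem~2.1-type statement is not just abstract $K(\epsilon)$-quasi-Fuchsianness but that the limit set lies in a small neighborhood of the boundary circle of the plane of any chosen pants in the assembly. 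Citing only ``$K(\epsilon)$-quasi-Fuchsian with $K(\epsilon)\to 1$'' gives no control over \emph{which} round circle the limit set approximates, so your step three needs the stronger form of the lemma, and the justification you give for it (piecewise $C^1$-approximation) is exactly the statement to be proved, not a reason. Secondarily, your worry about the seed distorting the matching is misplaced relative to the actual construction: Kahn--Markovic match \emph{all} $(R,\epsilon)$-good pants globally, and one then locates, using equidistribution of good pants among totally geodesic ``models,'' some pants in the already-built surface whose plane is close to $\Pi_C$; no constrained or seeded matching is required.
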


An immediate consequence is that:
\begin{cor}\label{cor:circle separation}
Let $M$ be a closed hyperbolic 3-manifold.
For each pair of distinct points $p,q\in \boundary \widetilde M$
there is an immersed quasi-fuchsian surface $F\rightarrow M$ with a lift of universal cover $\widetilde F\subset \widetilde M$
such that $\boundary \widetilde F$ separates $p,q$ in $\boundary \widetilde M$.
\end{cor}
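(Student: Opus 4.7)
The plan is to exhibit, for any two distinct points $p, q \in \boundary \widetilde M$, a quasi-fuchsian surface whose limit set separates them, by starting from a totally geodesic separating circle and perturbing via Proposition~\ref{prop:KM}.

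First I identify $\boundary \widetilde M$ with $S^2 = \boundary \hyperbolic^3$ and choose any great circle $C$ such that $p$ and $q$ lie in distinct components of $S^2 - C$; for concreteness, take $C$ to be the locus of points equidistant from $p$ and $q$ in the round metric on $S^2$. Applying Proposition~\ref{prop:KM} to this $C$ yields immersed quasi-fuchsian surfaces $F_i \to M$ and lifts $\widetilde F_i \subset \widetilde M$ whose boundary quasi-circles $\boundary \widetilde F_i$ converge to $C$.

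Next, I will argue that for $i$ sufficiently large, the Jordan curve $\boundary \widetilde F_i$ separates $p$ from $q$ in $S^2$. Let $\varepsilon = \tfrac12 \min(d(p,C), d(q,C)) > 0$. For $i$ large, $\boundary \widetilde F_i \subset N_\varepsilon(C)$, so $p$ and $q$ both miss $\boundary \widetilde F_i$. Parametrizing $\boundary \widetilde F_i$ and $C$ by $S^1$ so that corresponding points are uniformly close, the spherical straight-line homotopy between them stays inside $N_\varepsilon(C) \subset S^2 - \{p, q\}$, exhibiting $\boundary \widetilde F_i$ and $C$ as homotopic loops in $S^2 - \{p, q\}$. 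Since $C$ represents a generator of $\pi_1(S^2 - \{p,q\}) \cong \mathbb Z$, so does $\boundary \widetilde F_i$. The conclusion now follows from a standard Jordan curve argument: a simple closed curve in $S^2 - \{p, q\}$ that is essential in $\pi_1$ must have $p$ and $q$ in distinct complementary disks, since otherwise one complementary disk would be disjoint from $\{p, q\}$ and provide a nullhomotopy in $S^2 - \{p, q\}$.

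The one technical point is that the homotopy argument needs uniform convergence of parametrized loops rather than merely pointwise convergence of sets. However, the Kahn-Markovic surfaces are uniformly $(1+\epsilon_i)$-quasi-fuchsian with $\epsilon_i \to 0$, and this uniform quasi-symmetry of the boundary maps readily upgrades the convergence $\boundary \widetilde F_i \to C$ to uniform convergence of suitable parametrizations. Granting this, the rest is purely topological.
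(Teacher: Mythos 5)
Your proposal is correct and follows essentially the same route as the paper: choose a great circle bisecting $p$ and $q$, apply Proposition~\ref{prop:KM}, and observe that for large $i$ the quasi-circle $\boundary \widetilde F_i$ lies in a thin annular neighborhood of $C$ and separates $p$ from $q$ just as $C$ does. Your homotopy/Jordan-curve argument and the remark on upgrading pointwise to uniform convergence via the uniform quasi-fuchsian constants simply make explicit the separation step that the paper asserts in one line.
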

\begin{proof}
Let $C$ be the great circle that is the perpendicular bisector of a geodesic from $p$ to $q$.
By Proposition~\ref{prop:KM}, let $F_i\rightarrow M$ be a sequence of surfaces whose universal covers
have boundaries that limit to $C$.

For sufficiently large $i$, the $\epsilon$-neighborhood of $C$ in $\boundary \widetilde M$
has the property that $\boundary \widetilde F_i$ separates its two bounding circles (just like $C$).
Thus $\boundary \widetilde F_i$ separates $p,q$ in $\boundary \widetilde M$.
\end{proof}

By combining Corollary~\ref{cor:circle separation} with Theorem~\ref{thm:soft separation}
and Proposition~\ref{prop:sageev compactness},
we obtain the following result:
\begin{thm}\label{thm:cubulating 3-manifolds}
Let $M$ be a closed hyperbolic 3-manifold.
Then $\pi_1M$ acts freely and cocompactly on a CAT(0) cube complex.
\end{thm}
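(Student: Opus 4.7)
The plan is to verify the hypotheses of Theorem~\ref{thm:soft separation} using the closed quasi-fuchsian surfaces supplied by Kahn-Markovic, and then upgrade the resulting proper cocompact action to a free one by invoking torsion-freeness of $\pi_1 M$.

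First I would identify $\boundary \pi_1 M$ with $\boundary \hyperbolic^3 \cong S^2$ through the isometric action of $\pi_1 M$ on $\widetilde M \cong \hyperbolic^3$. Given any pair of distinct points $u,v \in S^2$, Corollary~\ref{cor:circle separation} produces an immersed quasi-fuchsian surface $F \rightarrow M$ with a lift $\widetilde F \subset \hyperbolic^3$ of its universal cover such that $\boundary \widetilde F$ separates $u$ from $v$. Letting $H$ denote the $\pi_1 M$-stabilizer of $\widetilde F$, the quasi-fuchsian hypothesis tells us that $H$ is quasiconvex in $\pi_1 M$ and that $\boundary H = \boundary \widetilde F$ is a Jordan curve in $S^2$.

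Next I would translate this boundary separation into Cayley-graph separation so as to conclude that $H$ is codimension-1. Since $\boundary H$ is a Jordan curve cutting $S^2$ into two open disks $U, V$, a sufficiently thick neighborhood $N_r(H)$ of the convex core of $H$ cuts the Cayley graph $\Gamma$ of $\pi_1 M$ into pieces whose limits accumulate on $U$ and $V$ respectively; both of these are deep because each of $U,V$ contains infinitely many conical limit points of $\pi_1 M$. Hence $H$ is a quasiconvex codimension-1 subgroup with $u,v$ in distinct components of $\boundary \pi_1 M - \boundary H$. Since $(u,v)$ was arbitrary, the hypothesis of Theorem~\ref{thm:soft separation} is verified.

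Applying Theorem~\ref{thm:soft separation} then yields a finite collection $H_1,\dots,H_k$ of quasiconvex codimension-1 subgroups of $\pi_1 M$ such that $\pi_1 M$ acts properly and cocompactly on the dual CAT(0) cube complex (cocompactness being also guaranteed by Proposition~\ref{prop:sageev compactness}). Freeness comes for free: because $\pi_1 M$ is torsion-free, properness forces every cube stabilizer to be trivial. The main obstacle is really absorbed into Corollary~\ref{cor:circle separation}, which repackages the deep Kahn-Markovic result; the only delicate point in the present argument is the step translating Jordan-curve separation on $S^2$ into deep-component separation in $\Gamma$, but this is standard for quasi-fuchsian limit sets in closed hyperbolic $3$-manifolds.
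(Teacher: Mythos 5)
Your proposal is correct and matches the paper's argument, which obtains this theorem precisely by combining Corollary~\ref{cor:circle separation} with Theorem~\ref{thm:soft separation} and Proposition~\ref{prop:sageev compactness}; your added remarks (quasi-fuchsian surface subgroups are quasiconvex and codimension-1 since their limit circles separate $S^2$, and freeness follows from properness plus torsion-freeness of $\pi_1M$) just make explicit what the paper leaves implicit.
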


\begin{rem}
The proof of Theorem~\ref{thm:cubulating 3-manifolds} depends crucially on Proposition~\ref{prop:KM}.
 We note that Lackenby proved that
any arithmetic 3-manifold contains a surface group \cite{Lackenby08}. It follows from his proof that when the $3$-manifold is
closed the surface subgroup he constructs
is quasiconvex. Theorem~\ref{thm:cubulating 3-manifolds} for closed arithmetic $3$-manifolds
thus follows from the combination of Lackenby's theorem and Theorem~\ref{thm:soft lattice cubulation}.
\end{rem}

Let $M$ be a closed hyperbolic 3-manifold $M$.
The celebrated ``virtual Haken conjecture" for $M$
would follow from the result of Kahn and Markovic and the other well-known conjecture that:
\begin{eqnarray} \label{conj}
\mbox{\emph{quasi-fuchsian surface subgroups of $\pi_1 M$ are separable}},
\end{eqnarray}
which means that quasi-fuchsian surface subgroups are closed in the profinite topology
of $\pi_1 M$. In fact it follows from Proposition~\ref{prop:virtually special criteria}.(1) and
the word-hyperbolicity of $\pi_1 M$ that if moreover all quasiconvex subgroup are closed in the profinite topology then $\pi_1 M$ is virtually special.

Agol shows even more: if $\pi_1 M$ is virtually special then $M$ is virtually fibered \cite{Agol08}. Specifically,
he introduces the condition \emph{residually finite rational solvable} (RFRS) and proves the following:

\begin{prop}  Let $M$ be a closed $3$-manifold. If $\pi_1 M$ is
RFRS then $M$ is virtually fibered.
\end{prop}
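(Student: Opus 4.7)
The plan is to follow Agol's original strategy, using the RFRS filtration to trade Thurston complexity for coverings, combined with Gabai's sutured manifold theory. Write the RFRS filtration as a cofinal sequence $\pi_1 M = G_0 \supset G_1 \supset \cdots$ with $\bigcap G_i = \{1\}$ such that for each $i$ the subgroup $G_{i+1}$ contains the kernel of $G_i \to H_1(G_i;\integers)/\text{torsion}$. Since RFRS implies that some finite-index subgroup of $\pi_1 M$ has positive first Betti number, after replacing $M$ by a finite cover I may assume $b_1(M) \geq 1$.

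Next, I would pick a primitive cohomology class $\phi \in H^1(M;\integers)$ and a Thurston-norm minimizing embedded surface $S \subset M$ Poincar\'e dual to $\phi$. The surface $S$ is a fiber of a fibration over $S^1$ iff the sutured manifold $(M\setminus\nu(S), \partial)$ admits a product decomposition, equivalently iff its \emph{guts} (the maximal non-product piece in the decomposition coming from Gabai's theory) are empty. So it suffices to show that after passing to a finite cover dictated by the RFRS filtration, the guts of the corresponding lifted surface vanish.

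The heart of the argument is the reduction step. Given a nonempty guts component $\Gamma \subset M\setminus\nu(S)$, I would invoke Gabai's construction to obtain a taut sutured manifold hierarchy; this produces incompressible subsurfaces whose fundamental groups represent nontrivial homology classes in $H_1(\Gamma;\rationals)$ that are not detected by $\phi$. Using the RFRS step from $G_i$ to $G_{i+1}$, these classes can be promoted to integral cohomology classes in a finite cover $M' \to M$. The pullback $\phi'$ of $\phi$ to $M'$ can then be perturbed (within its open cone in $H^1(M';\reals)$) by a small multiple of the new class, yielding a new Thurston-norm minimizer $S'$ on $M'$ whose guts, measured by a suitable complexity (e.g.\ $-\chi$ of the guts), are strictly smaller than the pullback of the guts of $S$. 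This is the crucial monotonicity estimate, and it is the main obstacle: one must verify that the new class genuinely decreases the guts and does not merely reshuffle them, which requires the Thurston-norm arguments from \cite{Agol08} together with the compatibility of the sutured hierarchy with finite covers.

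Iterating this procedure produces a decreasing sequence of guts complexities in a tower of finite covers indexed by the RFRS filtration. Since the complexity is a nonnegative integer, it stabilizes at zero after finitely many steps. At that stage the corresponding Thurston-norm minimizing surface in the finite cover has empty guts, so its complement is a product sutured manifold, and by the standard criterion (Stallings' fibration theorem in this setting) the cover fibers over $S^1$ with the surface as a fiber. Hence $M$ is virtually fibered, as required.
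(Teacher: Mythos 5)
First, a point of comparison: the paper does not prove this proposition at all --- it is Agol's theorem, quoted verbatim from \cite{Agol08}. What you have written is an outline of Agol's own argument (RFRS filtration, Thurston-norm minimizing surfaces, sutured manifold theory, a complexity that decreases through the tower of covers), so there is no question of your having found a different route; the question is whether your sketch stands as a proof. It does not, because the one step that carries all the content is exactly the step you label ``the main obstacle'' and then defer to \cite{Agol08}: namely, that after passing to the cover provided by $G_i \supset G_{i+1}$ and perturbing the pulled-back class by classes supported in the non-product part, one obtains a norm-minimizing surface whose complementary sutured manifold has strictly smaller complexity. Establishing that monotonicity (via Gabai's taut decompositions, additivity of the Thurston norm under the relevant cut-and-paste, and the fact that the RFRS condition guarantees the needed homology classes survive in $H_1(G_i;\integers)/\mathrm{torsion}$ and hence become integral in the next cover) \emph{is} Agol's theorem; asserting it and citing \cite{Agol08} makes the argument circular as a standalone proof.

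Two further imprecisions to repair if you want to flesh this out. First, ``$S$ is a fiber iff the guts are empty'' is not the correct criterion: $S$ is a fiber iff the complementary sutured manifold is a product $S\times I$ (Stallings), whereas empty guts only says the complement is assembled from product pieces along the annulus decomposition; Agol's induction is run on a complexity of the whole non-product complementary sutured manifold, not on $-\chi$ of the guts, and conflating the two is where a naive version of the decrease argument can fail (the perturbed class may indeed ``reshuffle'' product pieces without the guts-based count dropping). Second, your opening reduction ``RFRS implies some finite-index subgroup has $b_1>0$'' requires $\pi_1 M$ to be infinite (the trivial group is RFRS and $S^3$ is not virtually fibered); as stated for an arbitrary closed $3$-manifold the proposition needs the implicit irreducibility/asphericity hypotheses under which the paper applies it, namely $M$ closed hyperbolic.
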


The RFRS condition is a generalization of being poly-free-abelian that holds for many residually torsion-free nilpotent groups.
As right-angled Artin groups are residually torsion-free nilpotent (unpublished \begin{com} actually we can reference his PhD Thesis. For instance...\end{com}work of Droms),
by \cite[Thm.~1.5]{HaglundWiseSpecial} special groups are also residually torsion-free nilpotent.
In fact - as observed in \cite{WiseIsraelHierarchy} - the Droms variant of the Magnus representation
shows that right-angled Artin groups satisfy RFRS. Alternatively Agol directly proves that right-angled Coxeter groups are RFRS, and this virtually yielded the condition for right-angled Artin groups \cite[Thm.~2.2]{Agol08}.

Using Proposition~\ref{prop:virtually special criteria}.(3) - shown in \cite{HaglundWiseAmalgams} -
we obtain the following result:
\begin{thm}
If quasi-fuchsian surface subgroups of a hyperbolic 3-manifold $M$ are separable,
then $M$ is virtually fibered.
\end{thm}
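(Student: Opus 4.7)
The plan is to synthesize the ingredients already in hand: Theorem~\ref{thm:cubulating 3-manifolds} gives a cubulation of $\pi_1 M$, Proposition~\ref{prop:virtually special criteria}.(3) promotes this to virtual specialness, and Agol's proposition converts virtual specialness into virtual fibering via the RFRS condition.

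First I would invoke Theorem~\ref{thm:cubulating 3-manifolds} to obtain a free cocompact action of $\pi_1 M$ on a CAT(0) cube complex $C$. The codimension-1 subgroups used in the Sageev construction are quasi-fuchsian surface subgroups supplied by Corollary~\ref{cor:circle separation}, and by the general discussion of Sageev's construction recalled in the introduction, the stabilizer of each hyperplane of $C$ is commensurable with a conjugate of one of these quasi-fuchsian surface subgroups. In particular, for each immersed hyperplane $D\to \bar C$ in the compact quotient $\bar C = \pi_1 M\backslash C$, the subgroup $\pi_1 D \subset \pi_1 M$ is commensurable with a quasi-fuchsian surface subgroup of $\pi_1 M$.

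Next I would observe that separability in a finitely generated group is preserved under commensurability of finitely generated subgroups: if $K\subset L\subset \pi_1 M$ with $[L:K]<\infty$, then $K$ is separable if and only if $L$ is. Combining this with the standing hypothesis that all quasi-fuchsian surface subgroups are separable, we deduce that every hyperplane subgroup $\pi_1 D$ is separable in $\pi_1 M$. Since $\pi_1 M$ is word-hyperbolic, Proposition~\ref{prop:virtually special criteria}.(3) then shows that $\bar C$ is virtually special; equivalently, there is a finite-index subgroup $G'\subset \pi_1 M$ such that $G'\backslash C$ is special.

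Finally, by Proposition~\ref{prop:special properties}.(1), $G'$ embeds in a right-angled Artin group, and since right-angled Artin groups are RFRS (as recalled above via the Droms/Magnus representation, or via Agol's argument for right-angled Coxeter groups) and RFRS is inherited by subgroups, $G'$ is RFRS. Applying Agol's proposition to the finite cover $M'\to M$ with $\pi_1 M' = G'$, we conclude that $M'$ is virtually fibered, and hence so is $M$. The main technical load is already absorbed into Theorem~\ref{thm:cubulating 3-manifolds} (and ultimately Proposition~\ref{prop:KM}) and Proposition~\ref{prop:virtually special criteria}.(3); the only delicate point in the argument itself is the passage from separability of quasi-fuchsian surface subgroups to separability of hyperplane stabilizers, which is handled by the commensurability remark above.
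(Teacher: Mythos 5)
Your proof is correct and takes essentially the same route as the paper: cubulate $\pi_1M$ by Theorem~\ref{thm:cubulating 3-manifolds}, use separability of the quasi-fuchsian hyperplane subgroups to apply Proposition~\ref{prop:virtually special criteria}.(\ref{criterion:single}) and get virtual specialness, then pass through RFRS to Agol's fibering criterion. Your commensurability step merely makes explicit what the paper leaves implicit, and note that only the easy direction of your ``if and only if'' is needed (a finite-index subgroup of a quasi-fuchsian surface subgroup is again one, hence separable by hypothesis, and a finite union of cosets of a closed subgroup is closed); the converse direction you assert is not automatic in general, but it is never used.
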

\begin{proof}
By Theorem~\ref{thm:cubulating 3-manifolds}, $\pi_1 M = \pi_1C$ where $C$ is a compact nonpositively curved cube complex.
As surface subgroups are separable, we apply Proposition~\ref{prop:virtually special criteria}.(3)
to obtain a finite special cover of $C$ with $\widehat C$.
As recalled above this implies that $\pi_1 C$ virtually statisfies the (RFRS) condition of Agol.
Consequently, the corresponding finite cover $\widehat M$ satisfies Agol's criterion,
and so $\widehat M$ has a finite cover that fibers.
\end{proof}
According to \cite{WiseIsraelHierarchy} it even suffices for $\pi_1M$ to have a single separable
quasi-fuchsian subgroup to obtain virtual specialness and hence virtual fibering.

\section{Relatively Hyperbolic Extension}
\label{sec:nonuniform case}
In this section we generalize Theorem~\ref{thm:soft separation} to a relatively hyperbolic
situation. The notion of a relatively hyperbolic group was introduced by Gromov and
has been developed by various authors. We mainly follow Brian Bowditch's and Asli Yaman's treatments as developed
in \cite{Bowditch99RH, Yaman}. Let $G$ be a
finitely generated group with Cayley graph $\Gamma$. A {\it peripheral structure} on $G$ consists of a set $\mathcal{G}$ of infinite subgroups of $G$ such that each $P \in \mathcal{G}$ is equal to its normalizer in $G$, and each $G$-conjugate of $P$ lies in $\mathcal{G}$. We refer to an element of $\mathcal{G}$ as a {\it maximal parabolic} subgroup.

Bowditch \cite[\S 4]{Bowditch99RH} defined the notion
of a {\it hyperbolic $G$-set}. Such a set contains only many $G$-orbits and it moreover corresponds to it a quasi-isometry class of connected $G$-invariant Gromov-hyperbolic graphs.

The set $\mathcal{G}$ is a $G$-set. Let us assume that it contains only finitely many distinct $G$-conjugacy classes and let $P_1 , \ldots , P_s$ be a choice of representatives so that $\mathcal{G} = \{
P_i^g \; : \; i=1 , \ldots , s, \ g \in G \}$.

$G$ is {\it hyperbolic relative to $P_1 , \ldots , P_s$} if either $s=0$ and
$G$ is hyperbolic, or if $s\neq 0$ and $\mathcal{G}$ is a hyperbolic $G$-set.
We let $\overline{\Gamma}$ be a corresponding Gromov-hyperbolic graph so that the
maximal parabolic subgroups correspond to the infinite vertex stabilizers.

The boundary of $G$ is defined as the union $V_{\infty} (\overline{\Gamma}) \cup \boundary \overline{\Gamma}$, where $\boundary \overline{\Gamma}$ is the Gromov boundary of $\overline{\Gamma}$ and $V_{\infty} (\overline{\Gamma})$
is the vertex set of infinite valence in $\overline{\Gamma}$. It has a natural topology as a compact Hausdorff space
(see e.g. \cite[p. 37]{Yaman}).
This space, which we denote by $\boundary G$, only depends on $G$ and $P_1,
\ldots, P_s$.

The group $G$ acts as a convergence group on $\boundary G$.
It thus follows from the dynamical
definition of a convergence group that each infinite order element $g \in G$ has either one or two
fixpoints in $\boundary G$. The element $g$ is {\it loxodromic} if it has two fixpoints.
The conjugates of $P_1, \ldots, P_s$, are precisely the maximal \emph{infinite} subgroups that fix a
point $p \in \boundary G$ (maximal parabolic subgroups).
Such a subgroup acts properly and discontinuously on $\boundary G - \{ p\}$
and the quotient ${\rm Stab}_G (p) \backslash (\boundary G-\{p \})$ is compact.
A {\it parabolic point} is the fixpoint of a conjugate of one of $P_1, \dots, P_s$.

Let $T$ continue to denote the triple space of $\boundary G$.
The quotient $\bar T = G \backslash T$ is the union of a compact set and finitely many quotients
of ``cusp neighborhoods'' of parabolic points.

The triple space $T$ can be compactified by adding a copy of $\boundary G$, see \cite{Bowditch99}.
We thus topologize $T \cup \boundary G$ so that a
sequence $(u_i , v_i , w_i)$ in $T$ converges to $x$ in $T \cup \boundary G$ if and only if at least two
of the sequences $(u_i )$, $(v_i )$ and $(w_i )$ converge to $x$ in $\boundary G$.

A {\it cusp neighborhood} of a parabolic point $p \in \partial G$ is defined as follows:
Let $C$ be a compact subset of $\boundary G-\{p\}$ such that ${\rm Stab}_G (p) C = \boundary G - \{ p \}$ and let $K$ be a compact
subset containing an open
neighborhood of $C$ in $T \cup \boundary G$. A cusp neighborhood of $p$ in $T$
is an open subset of the form $T- {\rm Stab}_G (p) K$.

Let $p_1, \ldots , p_s$ be representatives from the $G$-classes of parabolic points in $\boundary G$
such that $P_i = {\rm Stab}_G (p_i)$.
The quotient $\bar T$ becomes a compact Hausdorff space upon the addition of the points $Gp_1, \ldots , Gp_s$,
 so that a neighborhood basis of $Gp_i$
is given by
 $\{Gp_i\} \cup G\backslash B$ where $B$ ranges over cusp neighborhoods of $p_i$. In particular we may
choose an open cusp neighborhood $B_i$ of $p_i$,  $i=1 , \ldots , s$, such that the $gB_i$, $g\in G/P_i$, $i=1 , \ldots , n$, are pairwise disjoint and
\begin{eqnarray} \label{Tth}
T_{\rm thick} := T - \bigcup_{i=1}^n \bigcup_{g \in G} gB_i
\end{eqnarray}
projects onto a compact subspace of $\bar T$.

Given a subgroup $H$ of $G$ we let $\boundary H$ be the set of
all accumulation points of $H$-orbits in $\boundary G$.
Note that if $N$ is a finite neighborhood of
$H$ in $\Gamma$ we may define $\boundary N \subset \boundary G$ as the intersection of
$\boundary G$ with the closure of any orbit of $N$ in $\bar \Gamma$.
Thus $\boundary N$ equals $\boundary H$.

Theorem~\ref{thm:soft separation} now generalizes as follows:
\begin{thm} \label{Thm:last}
Let $G$ be hyperbolic relative to a collection of maximal parabolic subgroups $P_1 , \ldots , P_s$.
Suppose that for each pair of distinct points $u,v \in \boundary G$ there exists a
quasi-isometrically embedded codimension-1 subgroup
$H$ such that $u$ and $v$ lie in distinct components of $\boundary G - \boundary H$.

Suppose that for each parabolic point $p$, there exist finitely many quasi-isometrically embedded
codimension-1 subgroups of $G$ whose intersections with ${\rm Stab}_G (p)$ yield
a proper action of ${\rm Stab}_G (p)$ on the corresponding dual cube complex.

Then there exist finitely many quasi-isometrically embedded codimension-1 subgroups of
$G$ such that the action of $G$ on the dual cube complex is proper.
\end{thm}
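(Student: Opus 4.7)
The plan is to adapt the proof of Theorem~\ref{thm:soft separation} by splitting the triple space $T$ into the thick part $T_{\mathrm{thick}}$ of~\eqref{Tth}, whose image in $\bar T$ is compact, and the cusp orbits, which are handled separately using the parabolic hypothesis. For each $(u,v,w) \in T_{\mathrm{thick}}$ we apply the separation hypothesis to $(u,v)$ and run the construction from the proof of Theorem~\ref{thm:soft separation} essentially verbatim: Hausdorffness of $\partial G$ and Lemma~\ref{Properness}, which remains valid since $G$ still acts as a convergence group on $\partial G$ in the relatively hyperbolic setting, yield an open saturated neighborhood $M_{(u,v,w)} = U \times V \times W$ with $U,V$ separated by $\partial H_{(u,v,w)}$ and satisfying the equivariance property $gM \cap M \in \{M, \emptyset\}$ outside a finite subgroup. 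The $G$-saturated sets $\bigcup_{j \in G} jM_{(u,v,w)}$ form an open cover of $T_{\mathrm{thick}}$; compactness of its image in $\bar T$ yields a finite subcover and thus finitely many quasi-isometrically embedded codimension-1 subgroups $H_1, \ldots, H_k$. Adjoin to these the finite families $L_{i,1}, \ldots, L_{i,m_i}$ provided by the second hypothesis at each representative parabolic point $p_i$, and denote the combined finite family by $\mathcal H$.

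To verify that $G$ acts properly on the dual cube complex $C$ associated to $\mathcal H$, we reduce to showing that no infinite-order element fixes a $0$-cube (using that $C$ is locally finite because the subgroups in $\mathcal H$ are quasi-isometrically embedded and of codimension-$1$). If $g$ is loxodromic, its fixed points $g^{\pm \infty}$ are non-parabolic, and we choose $w' \in \partial G - \{g^{\pm \infty}\}$ so that, possibly after a $G$-translation, the triple $(g^{+\infty}, g^{-\infty}, w')$ lies in $T_{\mathrm{thick}}$; it is then contained in some $jM_{(u,v,w)}$ from our finite subcover, and the convergence-group uniform convergence argument from the proof of Theorem~\ref{thm:soft separation} shows that the wall $jN_{(u,v,w)}$ separates $g^{+\infty}$ from $g^{-\infty}$, whence iterates of $g$ produce infinitely many walls facing in the same direction from the putative fixed $0$-cube, a contradiction. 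If $g$ is parabolic, it lies in some conjugate of $P_i$; the hypothesis that $\{L_{i,\ell} \cap P_i\}$ yields a proper $P_i$-action on its dual cube complex $D_i$ implies that no infinite-order element of $P_i$ fixes a $0$-cube of $D_i$, and the natural $P_i$-equivariant restriction of halfspace orientations from $C$ to $D_i$ then forces $g$ to fix no $0$-cube of $C$ either.

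The main obstacle is the loxodromic case: showing that for every loxodromic $g$ the triple $(g^{+\infty}, g^{-\infty}, w')$ can actually be placed into $T_{\mathrm{thick}}$ by an appropriate choice of $w'$ followed by a $G$-translation. This is delicate when the fixed points accumulate near a parabolic point $p_i$, in which case the triple may sit in a cusp region regardless of $w'$; however, since neither $g^{\pm \infty}$ is itself parabolic, and since by definition a cusp neighborhood $B_i$ consists of triples that fail to be $\mathrm{Stab}_G(p_i)$-conjugate into a fixed compact fundamental set, one expects a suitable $\mathrm{Stab}_G(p_i)$-translate of the triple to escape the cusp and land in $G \cdot T_{\mathrm{thick}}$. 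A secondary technicality is the $P_i$-equivariant identification of sub-$0$-cubes used in the parabolic case, relating $D_i$ to the sub-wall-system of $C$ coming from the $L_{i,\ell}$; this should follow from standard properties of Sageev's construction and the fact that the $L_{i,\ell}\cap P_i$ are, by hypothesis, genuinely codimension-$1$ in $P_i$. The remark~\ref{rem:emptyset separation} should also be invoked to cover the case where $\partial G$ happens to be disconnected at a parabolic point, allowing trivial separation by finite subgroups.
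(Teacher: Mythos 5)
There are two genuine gaps, and they are precisely the points where the paper has to do real work beyond transplanting the proof of Theorem~\ref{thm:soft separation}.

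First, your properness deduction is unsound. You assert that $C$ is locally finite ``because the subgroups in $\mathcal H$ are quasi-isometrically embedded and of codimension-1'' and then reduce properness to the statement that no infinite-order element fixes a $0$-cube. In the relatively hyperbolic setting the dual complex need \emph{not} be locally finite (the paper's own example: $G=P_1*P_2$ with each $P_i$ the fundamental group of a locally infinite cube complex), and since the $G$-action on $C$ is not cocompact, finiteness of $0$-cube stabilizers does not imply properness. This is exactly why the paper proves Lemma~\ref{lem:r.h. axes separation}: it imports the cosparseness structure of Proposition~\ref{prop:sparse cubulation} ($C=GK\cup\bigcup_i GC_i$ with controlled intersections of translates of the $C_i$), first proves properness of $G$ on $GK$ (Lemma~\ref{lem:GK proper}, where the parabolic case is handled by projecting a stabilized vertex to the closest point of the stabilized convex subcomplex $hC_i$), and then handles an arbitrary finite ball $B\subset C$ by a counting and translation-length argument that plays Proposition~\ref{prop:sparse cubulation}.(2) against the hypothesis that each $P_i$ acts properly on its induced cubulation. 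Your parabolic-case sketch (restricting halfspace orientations from $C$ to $D_i$) at best bounds vertex stabilizers; it does not yield properness, and it skips the identification of $D_i$ with a convex subcomplex of $C$ that Proposition~\ref{prop:sparse cubulation} supplies.

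Second, the step you flag as ``delicate'' --- placing $(g^{+\infty},g^{-\infty},w')$ in $T_{\rm thick}$ --- is left as a hope (``one expects a suitable $\mathrm{Stab}_G(p_i)$-translate to escape the cusp''), but this is the part that needs proof, and the paper's resolution is different. Lemma~\ref{Lem:w} shows directly, with no translation, that for distinct \emph{non-parabolic} points $u,v$ lying in the \emph{same component} of $\boundary G$ there is a $w$ with $(u,v,w)\in T_{\rm thick}$; the proof is a connectedness argument in $T\cup\boundary G$ using the fact that the closure of a cusp neighborhood meets $\boundary G$ in the single parabolic point. This leaves the case of a loxodromic whose endpoints lie in \emph{distinct} components of $\boundary G$, which the paper handles by adding to the wall family the finite edge groups of Bowditch's accessibility splitting \cite{Bowditch99RH}, regarded as codimension-1 subgroups with empty boundary separating components as in Remark~\ref{rem:emptyset separation}. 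Your one-line appeal to Remark~\ref{rem:emptyset separation} does not capture this: the issue is not disconnectedness ``at a parabolic point'' but loxodromics joining different components, and one needs concrete finite codimension-1 subgroups realizing those separations, which only the accessibility splitting provides. Until both of these points are filled in, the proposal does not constitute a proof.
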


Before proving Theorem~\ref{Thm:last} we first need to generalize Proposition~\ref{prop:axis separation} to a relatively hyperbolic situation.
This is the subject of the following subsection.

\subsection{Relatively Hyperbolic Axis Separation}
\label{sec:r.h. axis separation}
While Proposition~\ref{prop:axis separation} immediately generalizes under the assumption that $C$ is locally finite,
this is itself not always easily deduced.
In fact, it is not always the case that $C$ is locally finite - even in the relatively hyperbolic case.
For instance, let $G$ be the free product $P_1*P_2$ where each $P_i$ is the fundamental group of a locally-infinite
cube complex, and note that $G$ is hyperbolic relative to $P_1,P_2$.

Let $G$ be hyperbolic relative to a collection of parabolic subgroups $P_1,\dots, P_s$.
Given a finite collection of codimension-1 subgroups $H_1,\dots, H_k$ of $G$, we let
$N_i$ be as in the introduction and let $W_i$ denote the wall $\{\overleftarrow N_i, \overrightarrow N_i\}$.

The main theorem in \cite{HruskaWiseAxioms} can be stated as follows:
\begin{prop}\label{prop:sparse cubulation}
Let $G$ be a f.g. group that is hyperbolic relative to a collection of parabolic subgroups $P_1,\dots, P_s$.
Let $H_1,\dots, H_k$ be a collection of quasi-isometrically embedded codimension-1 subgroups of $G$.
Let $C$ denote the CAT(0) cube complex dual to the $G$-translates of $W_1,\dots, W_k$.
For each $i$, let $C_i$ denote the CAT(0) cube complex dual to the walls in $P_i$ corresponding to the nontrivial
walls obtained from intersections with translates of the $W_i$, and note that $C_i$ embeds in $C$ as a convex subcomplex.
Then: \begin{enumerate}
\item there exists a compact subcomplex $K$ such that $C= GK \cup_{i=1}^s GC_i$, and
\item $g_iC_i \cap g_jC_j \subset GK$ unless $i=j$ and $g_j^{-1}g_i \in \stabilizer(C_i)$.
\end{enumerate}
\end{prop}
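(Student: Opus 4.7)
The plan is to adapt Sageev's cocompactness argument by separating the analysis into a ``thick'' part, handled as in the hyperbolic case, and a collection of ``peripheral'' parts, one for each orbit of parabolic cosets, each of which is absorbed into the subcomplex $C_i$. Throughout, the key geometric input is the bounded packing property for quasi-isometrically embedded codimension-1 subgroups of a relatively hyperbolic group: only finitely many $G$-translates of the walls $W_1,\dots, W_k$ can cross any given compact set of $\Gamma$ once we stay outside sufficiently deep horoball neighborhoods of the parabolic cosets.

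First I would fix, for each $i$, a horoball-type neighborhood $B_i$ of the parabolic coset $P_i$, large enough that any wall $gW_j$ either (a) avoids $B_i$ altogether, (b) meets the thick complement $\Gamma-\bigcup g'B_i$ along a set of uniformly bounded diameter, or (c) has its intersection with $gP_i$ deep in $gP_i$. This trichotomy is the main technical content and is where the QI-embedding hypothesis is used: it guarantees that a wall which penetrates deeply into a horoball does so through a wall of the coset itself, i.e.\ the intersection corresponds to a nontrivial wall of $P_i$ used to build $C_i$.

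Second I would translate this into a statement about 0-cubes of $C$, regarded as canonical ultrafilters on the walls. Call a 0-cube $v$ \emph{peripheral (with respect to $gP_i$)} if the walls on which $v$ disagrees with some base 0-cube all have deep intersection with $gP_i$; call $v$ \emph{thick} otherwise. A peripheral 0-cube is determined by its restriction to the walls of $gP_i$, so it lies in $gC_i$. A thick 0-cube differs from a base vertex only across finitely many $G$-translates of $W_1,\dots,W_k$ that meet a compact set in the thick part, so by the packing statement the collection of thick 0-cubes fits into finitely many $G$-orbits of a compact subcomplex $K$. This yields $C = GK \cup \bigcup_{i=1}^s GC_i$, proving (1).

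For (2), suppose $g_iC_i \cap g_jC_j$ contains a 0-cube $v\notin GK$. By construction $v$ is peripheral for both $g_iP_i$ and $g_jP_j$, which forces the horoball neighborhoods of these two parabolic cosets to overlap in an unbounded set; by the relatively hyperbolic structure this happens only when $g_iP_i = g_jP_j$, and hence $i=j$ with $g_j^{-1}g_i \in \stabilizer(C_i)$. The main obstacle is the trichotomy in the first step: walls can a priori accumulate toward a parabolic point, and the argument must rule out ``diagonal'' walls that are neither effectively inside $gP_i$ nor confined to the thick part. Quasi-isometric embedding together with the thin-triangles geometry of $\overline\Gamma$ is precisely what prevents this, and making this quantitative is the delicate heart of the proof.
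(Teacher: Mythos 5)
First, note that the paper does not actually prove this proposition: it is quoted verbatim as ``the main theorem in \cite{HruskaWiseAxioms}'' (Hruska--Wise), so there is no in-paper argument to compare against. Your proposal is therefore judged on its own terms, and as it stands it is an outline of the Hruska--Wise strategy with the genuinely hard steps left open rather than a proof. You have correctly identified the architecture (a thick/peripheral dichotomy for $0$-cubes, bounded packing of walls in the thick part, and the fact that a wall penetrating deeply into a peripheral coset must induce a wall of that coset, which is where quasi-isometric embedding --- hence relative quasiconvexity --- enters), but two of the load-bearing steps are missing or misstated.

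Concretely: (i) your trichotomy is wrong as formulated --- alternative (b), that a wall ``meets the thick complement along a set of uniformly bounded diameter,'' is not something one can arrange, since a wall may travel an unbounded distance through the thick part while also dipping into horoballs; the correct and needed dichotomy is that the coarse intersection of a wall with a given peripheral coset is either uniformly bounded or deep (inducing a wall of $P_i$), and proving this uses relative quasiconvexity of the $H_j$, not just the packing property. (ii) More seriously, the passage from ``thick $0$-cube'' to ``lies in $GK$'' is asserted, not proved: you need to show that the walls on which such a $0$-cube disagrees with the base ultrafilter all pass uniformly close to a single point of the thick part. This is a coarse Helly-type statement about pairwise-crossing relatively quasiconvex walls (the relative analogue of the key step in Sageev's cocompactness theorem, Proposition~\ref{prop:sageev compactness}) and it does not follow from bounded packing alone; it is exactly this crossing-walls-meet-near-a-point lemma, together with its peripheral counterpart, that constitutes the heart of \cite{HruskaWiseAxioms}. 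Similarly, in part (2) the claim that being peripheral for both $g_iP_i$ and $g_jP_j$ forces unbounded coarse overlap of the two cosets, and hence $g_iP_i=g_jP_j$, is plausible but needs an argument (deep walls are QI-embedded, so a common family of deep walls coarsely connects the two cosets; distinct peripheral cosets have bounded coarse intersection by relative hyperbolicity). Finally, you use without justification that a ``peripheral'' $0$-cube lies in $gC_i$, i.e.\ that $C_i$ embeds as a convex subcomplex; this identification is itself part of what must be established. So the proposal points at the right ingredients but does not yet contain the proof; if you intend to rely on the literature, the honest course is to cite \cite{HruskaWiseAxioms}, as the paper does.
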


\begin{rem} \label{rem:connected}
We may further assume that $GK$ is connected.
Indeed, since $G$ is finitely generated, and $C$ is connected, one can add a collection of paths $S_i$ to $K$
such that each $S_i$ starts at the basepoint in $K$ and ends at the translate of this basepoint by the $i$-th generator of $G$.
\end{rem}

We shall now state a properness criterion. Under the hypothesis of Proposition~\ref{prop:sparse cubulation} we say that an infinite order element $g$ of $G$ satisfies the {\it axis separation condition} if there
is a translate $fW_i$ such that the walls
$g^{-n} fW_i$ and $g^{n} f W_i$ are separated by $fW_i$ for some $n$,
in the sense that the partitions are nested.

\begin{lem}\label{lem:r.h. axes separation}
Let $G$ be hyperbolic relative to $P_1,\dots, P_s$.
Let $H_1,\dots, H_k$ be a finite collection of quasi-isometrically embedded codimension-1 subgroups of $G$ that are associated to walls $W_1,\dots, W_k$.
Then $G$ acts properly on the dual cube complex $C$ provided the following two conditions both hold:
\begin{enumerate}
\item Each $P_i$ acts properly on $C$. More explicity, $P_i$ acts properly on and stabilizes
a convex subcomplex $C_i\subset C$. Moreover, $C_i$ is isomorphic to the cubulation of $P_i$
 induced by intersections with the relevant $G$-translates of the walls.
\item The axis separation condition holds for loxodromic elements.
\end{enumerate}
\end{lem}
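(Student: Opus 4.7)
The strategy is to show that every $0$-cube of $C$ has finite stabilizer, and then to promote this to properness of the full $G$-action via the sparse cocompact decomposition $C=GK\cup\bigcup_i GC_i$ furnished by Proposition~\ref{prop:sparse cubulation}. Fix a $0$-cube $v$ and set $S=\stabilizer_G(v)$; I would suppose for contradiction that $S$ is infinite. The main lever is the standard convergence-group dichotomy for relatively hyperbolic groups: every infinite subgroup of $G$ either contains a loxodromic element or is contained in a unique conjugate $gP_ig^{-1}$ of some maximal parabolic.

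\textbf{Parabolic alternative.} If $S\subset gP_ig^{-1}$, then $g^{-1}Sg$ is an infinite subgroup of $P_i$ fixing the $0$-cube $g^{-1}v\in C$, directly contradicting hypothesis~(1), which asserts that $P_i$ acts properly on $C$ and in particular with finite stabilizers at $0$-cubes.

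\textbf{Loxodromic alternative.} If $g\in S$ is loxodromic, hypothesis~(2) supplies a translate $fW_j$ and an integer $n\ge 1$ such that $fW_j$ separates $g^{\pm n}fW_j$, with halfspaces nested as $g^{+n}f\overrightarrow N_j\subset f\overrightarrow N_j\subset g^{-n}f\overrightarrow N_j$ (after, if needed, swapping $\overrightarrow{N}_j$ and $\overleftarrow{N}_j$). Iterating with $g^n$, the family $\{g^{nr}fW_j:r\in\integers\}$ is an infinite nested sequence of walls. Since $g^n v=v$, the chosen halfspace of $v$ in $g^{nr}fW_j$ is the $g^{nr}$-translate of the chosen halfspace of $v$ in $fW_j$, and these halfspaces therefore form an infinite nested chain all oriented the same way along the sequence. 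I would then run the combinatorial argument of Proposition~\ref{prop:axis separation}: in the direction where the chosen halfspaces shrink (toward the attracting fixed point of $g^n$), they eventually fail to contain $1\in G$, contradicting the defining axiom that all but finitely many chosen halfspaces of a $0$-cube contain any given element.

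\textbf{Main obstacle.} The conceptually subtle step is the loxodromic alternative. Since $C$ need not be locally finite in the relatively hyperbolic setting, Proposition~\ref{prop:axis separation} cannot be quoted as a black box, and one must reproduce its shrinking-halfspace argument using only the combinatorial $0$-cube definition. The parabolic alternative is essentially tautological from hypothesis~(1), and once finite $0$-cube stabilizers are established, the sparse cocompactness of Proposition~\ref{prop:sparse cubulation} (together with Remark~\ref{rem:connected}) upgrades finiteness of stabilizers to properness of the full $G$-action on $C$.
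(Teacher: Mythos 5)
Your analysis of $0$-cube stabilizers is essentially the paper's Lemma~\ref{lem:GK proper}: the loxodromic alternative is handled by exactly the shrinking-halfspace argument you describe (and, contrary to your ``main obstacle'' paragraph, this step is unproblematic --- it is purely combinatorial, quotes nothing about local finiteness, and is the same argument as in Proposition~\ref{prop:axis separation}); the parabolic alternative is also fine, though note the paper only uses the explicit form of hypothesis~(1) (properness of $P_i$ on the convex subcomplex $C_i$) and then transfers finiteness to $\stabilizer(v)$ via the closest-point projection of $v$ to $hC_i$, rather than invoking properness of $P_i$ on all of $C$.

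The genuine gap is your last sentence: ``sparse cocompactness \dots upgrades finiteness of stabilizers to properness'' is not a valid inference, and it is precisely where the real work of the lemma lies. Finite $0$-cube stabilizers imply properness only on the cocompact part $GK$; the $G$-action on $C$ is merely cosparse and $C$ need not be locally finite, so bounded-displacement return sets $\{g: gB\cap B\neq\emptyset\}$ are not controlled by vertex stabilizers alone. The paper's proof devotes its second half to this: given a finite ball $B\subset C$, one first shows $B\cap GK$ lies in a (possibly much larger, since $GK\subset C$ need not be quasi-isometrically embedded) finite ball $B'$ of $GK$, and that $B\subset B'\cup\bigcup jC_i$ over the finitely many translates $jC_i$ meeting $B'$; then, if infinitely many $g$ returned $B$ to itself, properness on $GK$ disposes of returns meeting $GK$, and Proposition~\ref{prop:sparse cubulation}.(2) forces infinitely many of the remaining $g$, after conjugating by one of finitely many $j$, into $\stabilizer(C_i)=P_i$ (this uses maximality of the parabolics) while moving a point of $jC_i$ a distance at most $2r$ --- contradicting the \emph{metric} properness of the $P_i$-action on its induced cubulation supplied by hypothesis~(1). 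That hypothesis is consumed exactly here, and is strictly stronger input than ``finite stabilizers''; without reproducing this ball-decomposition argument your proof does not establish properness of the $G$-action on $C$.
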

Before proceeding to the main part of the proof of Lemma~\ref{lem:r.h. axes separation}
we prove the following partial result.
We follow the notation of Proposition~\ref{prop:sparse cubulation}
and assume that
$GK$ is connected by Remark~\ref{rem:connected}.

\begin{lem}\label{lem:GK proper}
The group $G$ acts properly on $GK$.
\end{lem}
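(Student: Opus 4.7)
The plan is to prove the stronger statement that $\stabilizer_G(v)$ is finite for every $0$-cube $v$ of $K$. Since $K$ is a finite subcomplex and any $g$ with $gK\cap K\neq\emptyset$ must map some $0$-cube of $K$ to a $0$-cube of $K$, this bounds $\{g\in G: gK\cap K\neq\emptyset\}$ by a finite union of cosets of vertex stabilizers, giving properness of the $G$-action on $GK$.

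Fix $v\in V(K)$ and suppose $\stabilizer_G(v)$ is infinite. The Tits-style dichotomy for subgroups of a relatively hyperbolic group splits this into two cases: either $\stabilizer_G(v)$ contains a loxodromic element, or $\stabilizer_G(v)$ is entirely contained in some conjugate $P_j^h$ of a peripheral subgroup. I would treat these in turn.

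In the loxodromic case, Condition~(2) supplies a translate $fW_i$ such that $g^{\pm n}fW_i$ are separated by $fW_i$ for some $n$. The bi-infinite sequence $\{g^{nr}fW_i : r\in\integers\}$ is then a nested family of walls shifted by $g^n$, and since $g^n v=v$, the halfspace chosen by $v$ on each of these walls is rigidly determined by $r$. Running $r$ in one direction produces infinitely many walls on which $v$ disagrees with the reference choice at $1\in G$, contradicting the defining property of a $0$-cube. This is a direct transcription of the argument in the proof of Proposition~\ref{prop:axis separation}.

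In the peripheral case, conjugation by $h$ produces an infinite subgroup $Q\subseteq P_j$ fixing the $0$-cube $h^{-1}v\in V(C)$. I would invoke the combinatorial nearest-point (gate) projection $\pi\colon V(C)\to V(C_j)$ onto the convex subcomplex $C_j\subset C$; this map is well-defined purely in terms of which walls cross $C_j$ and which do not, so it is equivariant under any element stabilizing $C_j$, and in particular $P_j$-equivariant. Consequently $Q$ fixes $\pi(h^{-1}v)\in V(C_j)$, which contradicts Condition~(1): properness of the $P_j$-action on $C_j$ forces $0$-cube stabilizers in $V(C_j)$ to be finite. The main obstacle I foresee is this peripheral subcase, which requires both the relatively hyperbolic Tits alternative and a genuinely $P_j$-equivariant gate projection in a cube complex $C$ that need not be locally finite; the loxodromic subcase, by contrast, is essentially a reread of Proposition~\ref{prop:axis separation}.
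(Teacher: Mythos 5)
Your proposal is correct and follows essentially the same route as the paper: rule out loxodromic elements in $\stabilizer(v)$ via the axis-separation/nested-walls argument, then handle an infinite stabilizer inside a peripheral conjugate by projecting $v$ to the stabilized convex subcomplex $C_i$ (the paper phrases your equivariant gate projection as the closest point $v'$ in $hC_i$, which is fixed by $\stabilizer(v)$) and invoking properness of the peripheral action. The only cosmetic difference is that you reduce properness to finiteness of stabilizers of $0$-cubes of $K$ directly, while the paper cites cocompactness of $G$ on $GK$ for the same reduction.
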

\begin{proof}
Since $G$ acts cocompactly on $GK$, it suffices to show that no vertex of $GK$ has infinite stabilizer.

 Let $v$ be a vertex of $GK$.
We first prove that $\stabilizer(v)$ cannot contain a loxodromic element. Assume to the contrary that
some loxodromic element $g$ stabilizes $v$. Since the axis separation condition holds for $g$, there is a translate
$fW_i$ and an integer $n$ such that the walls
$g^{-n} fW_i$ and $g^{n} f W_i$ are separated by $fW_i$. The sequence
$\{ g^{nr} f W_i \; : \; r \in {\Bbb Z} \}$ of disjoint separating walls is then shifted by $g^n$.
As $g$ fixes $v$ we conclude that the walls
$g^{nr} W_i$ have to face the same direction in $v$. This violates the property of a vertex
(or rather 0-cube)
that all but finitely many walls face $1 \in G$.

Since $\stabilizer(v)$ contains no loxodromic element, it is either elliptic and hence finite,
or it is contained in some parabolic subgroup $P=hP_ih^{-1}$. By hypothesis, $P$ acts properly on its stabilized subcomplex $hC_i$. As both $v$ and the convex subcomplex
 $hC_i$ are stabilized, the point $v'$ in $hC_i$ that is closest to $v$ is also stabilized by $\stabilizer (v)$.
But $\stabilizer(v')$ is finite, so $\stabilizer(v)$ is finite as well.
\end{proof}

\begin{proof}[Proof of Lemma~\ref{lem:r.h. axes separation}]
Let $B$ be a finite ball in $C$. Then $B\cap GK$ lies in a finite ball $B'$ of $GK$.
By Lemma~\ref{lem:GK proper}, $G$ acts properly and cocompactly on $GK$, and so we see that $B'$ intersects
finitely many parabolic subgroups - or rather $B'$ intersects finitely many sets of the form $P_i^gK$.
Thus $B$ lies in the union of $B'$ and finitely many $hC_i$ that intersect $B'$.
\footnote{We note that the radius of $B'$ in $GK$ could be significantly larger than the
 radius of $B$ in $C$, as the map $GK\subset C$ might not be a quasi-isometric embedding.}
  For each $i \in\{1, \ldots, s\}$
we let $J_i$ denote the finite subset of $G$
consisting of elements $j$ with $jC_i \cap B'\neq \emptyset$.

We may now conclude the proof of the Proposition.
Suppose to the contrary that the set
$S=\{ g \in G \; : \; gB\cap B\neq \emptyset \}$ is infinite.
There are finitely many values of $g$ such that $gB'\cap B'\neq \emptyset$,
thus finitely many values where $g(B\cap GK)\cap B = g(B\cap GK)\cap (B\cap GK)$ is nonempty.

It hence follows from Proposition~\ref{prop:sparse cubulation}.(2) that for some $i$ and $j_1,j_2\in J_i$,
there are infinitely many $g \in S$ such that
 $g(j_1 C_i-GK)\cap (j_2 C_i-GK) \neq \emptyset$.
There are finitely many  $jC_i$ that intersect $B'$, so again Proposition~\ref{prop:sparse cubulation}.(2)
implies that there exists $i$ and $j\in J_i$ and infinitely many $g\in S$ such that
$g(j C_i-GK)\cap (j C_i-GK) \neq \emptyset$. So $j^{-1}g j\in \stabilizer C_i$ for
 infinitely many values of $g\in S$. Note that $P_i$ - being a maximal
\begin{com} We assume that the parabolic subgroups $P_i$ are maximal parabolic subgroups, and hence equal their own commensurators...
\end{com}
parabolic subgroup - is not commensurated by any larger subgroup so
 $\stabilizer C_i = P_i$. We thus have proved
that $S \cap jP_i j^{-1}$ is infinite. Furthermore, if $g \in S$ we have
$gB\cap B\neq \emptyset$ and the translation length of $g$ in $C$
is bounded by $2r$ where $r$ is the radius of the ball $B$. We conclude that there are infinitely many
elements $g \in jP_i j^{-1}$ of translation length bounded by $2r$. This contradicts (the hypothesis)
that the parabolic subgroup $j P_i j^{-1}$ acts properly on its cubulation $jC_i$.
\end{proof}

\subsection{Proof of Theorem \ref{Thm:last}}

The relative accessability result \cite[Prop~10.2]{Bowditch99RH} states
that $G$ has a splitting as a graph of groups whose edge groups are finite and each of whose vertex groups
does not split nontrivially over any finite subgroup relative to peripheral subgroups.
Using the Bass-Serre tree of this accessability splitting as a guide, for each such finite edge group $E_i$,
we regard $\boundary E_i =\emptyset$ as separating $\boundary G$ according to the way a corresponding edge of the Bass-Serre tree
separates its boundary (see Remark~\ref{rem:emptyset separation}).
It follows from \cite[Prop~10.1]{Bowditch99RH} that for each vertex group $V_i$ of the accessability splitting,
$\boundary V_i$ is a connected component of $\boundary G$.

The proof will now be similar to the proof of Theorem~\ref{thm:soft separation}. The main difference is that
the quotient $\bar T = G \backslash T $ is no longer compact. Instead we use the decomposition
in Equation~(\ref{Tth}) and the following:

\begin{lem} \label{Lem:w}
Let $u,v$  be distinct non-parabolic points in the same component of $\boundary G$.
There exists $w \in \boundary G$ such that
$$(u,v,w) \in T_{\rm thick}.$$
\end{lem}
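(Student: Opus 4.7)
The plan is to take $w$ very close to $u$ and show that, for $w$ sufficiently close, the triple $(u,v,w)$ is forced into $T_{\rm thick}$.

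\textbf{Step 1 (choice of $w$).} Since $u$ and $v$ are distinct points in the same connected component $V$ of the compact Hausdorff space $\boundary G$, $V$ is a non-degenerate continuum and $u$ is not isolated in $V$. Pick a sequence $(w_n)$ in $V-\{u,v\}$ with $w_n\to u$. Then each triple $(u,v,w_n)$ lies in $T$, and by the topology on $T\cup\boundary G$ (in which convergence to a boundary point is detected by the convergence of at least two coordinates), we have $(u,v,w_n)\to u$ in $T\cup\boundary G$.

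\textbf{Step 2 (key topological observation).} I would first establish that for every cusp neighborhood $gB_i$ associated to the parabolic point $gp_i$, its closure in $T\cup\boundary G$ meets $\boundary G$ only at $\{gp_i\}$. This is immediate from the definition $B_i=T-\stabilizer_G(p_i)K_i$ together with the fact that $\stabilizer_G(p_i)K_i$ contains an open neighborhood of $\boundary G-\{p_i\}$ in $T\cup\boundary G$ (since $\stabilizer_G(p_i)C_i=\boundary G-\{p_i\}$ and $K_i$ contains an open neighborhood of $C_i$).

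\textbf{Step 3 (reduction and the easy case).} Suppose no $w_n$ yields a triple in $T_{\rm thick}$. After passing to a subsequence and using finiteness of $s$, we may assume $(u,v,w_n)\in g_nB_i$ for a fixed $i$ and varying $g_n\in G$. Since $B_i$ is $\stabilizer_G(p_i)$-invariant, the $g_n$ may be taken as distinct coset representatives of $G/\stabilizer_G(p_i)$, i.e.\ corresponding to distinct parabolic points $g_np_i$. If only finitely many such cosets appear, a further subsequence has $g_n=g$ constant; Step~2 then forces the limit $u$ of $(u,v,w_n)$ to equal the unique boundary point $gp_i$ of $\overline{gB_i}\cap\boundary G$, contradicting the non-parabolicity of $u$.

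\textbf{Step 4 (the delicate case, main obstacle).} The remaining case, with $g_n\to\infty$ in $G$, is the crux. I would apply the convergence group property to extract, on a subsequence, source-sink dynamics $g_n^{-1}\to(a,b)\in(\boundary G)^2$, meaning $g_n^{-1}x\to a$ uniformly on compacta of $\boundary G-\{b\}$. Using $w_n\to u$ and a case analysis on whether $b\in\{u,v\}$, one checks that the sequence $g_n^{-1}(u,v,w_n)\in B_i$ must converge in $T\cup\boundary G$ to a point of $\boundary G$ (the dynamics of $g_n^{-1}$ collapse at least two coordinates to a common limit, so the limit is not a triple); by Step~2 this limit equals $p_i$, so $a=p_i$. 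The final contradiction would then come from a Hausdorff-limit analysis of the closures $\overline{g_nB_i}$, each containing both the degenerating triple $(u,v,w_n)\to u$ and the moving parabolic point $g_np_i$: the same-component hypothesis on $u,v$ is invoked to rule out pathological splittings of these closures and force $g_np_i\to u$, which is incompatible with the source-sink dynamics $g_n\to(b,p_i)$ once one examines $g_nu$ and $g_nv$ (both of which approach $b$). I expect this last step to be the subtlest point, and the precise way in which the ``same component'' hypothesis enters---ruling out the possibility that $u$ and $v$ are separated by a finite edge group of the accessibility splitting and thereby controlling the topology of the limiting cusp closures---to be where the main technical work lies.
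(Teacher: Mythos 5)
Your Steps 1--3 are sound; indeed Step 2 is a correct justification of the fact, used without proof in the paper, that the closure of a cusp neighborhood $gB_i$ in $T\cup\boundary G$ meets $\boundary G$ only in the parabolic point $gp_i$, and Step 3 correctly disposes of the case of boundedly many cosets. But Step 4 is a genuine gap, not a deferral of routine detail. First, your intermediate claim that the dynamics of $g_n^{-1}$ collapse at least two coordinates of $g_n^{-1}(u,v,w_n)$ fails precisely in the case you flag: if $b=u$, then since $w_n\to u$ neither $g_n^{-1}u$ nor $g_n^{-1}w_n$ is controlled by uniform convergence on compacta of $\boundary G-\{b\}$, so only the single coordinate $g_n^{-1}v$ is known to tend to $a$ and no convergence to a boundary point follows. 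Second, the proposed final contradiction is never actually produced: $g_np_i\to u$ is not in conflict with the source--sink data for $(g_n)$, because $p_i$ is exactly the exceptional point of that sequence, and the observation that $g_nu,g_nv\to b$ contradicts nothing by itself. Third, the place where the hypothesis that $u,v$ lie in the same component of $\boundary G$ enters is only gestured at (``rule out pathological splittings of the limiting cusp closures''); and any dynamical argument along your lines would in practice also need that non-parabolic points are conical limit points, an ingredient you never invoke.

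For comparison, the paper's proof avoids dynamics entirely and is a short connectedness argument: consider the continuous map $\phi(w)=(u,v,w)$ from the component $L$ containing $u,v$ into $T\cup\boundary G$ (with $\phi(u)=u$, $\phi(v)=v$ landing in $\boundary G$). The image $\phi(L)$ is connected and contains some triple $\phi(w)\in T$; if that triple lay in a cusp neighborhood $B$, then, using exactly your Step 2 fact that $\overline B$ meets $\boundary G$ only in one parabolic point (hence misses $\phi(u),\phi(v)$), one chooses neighborhoods $U,V$ of $\phi(u),\phi(v)$ disjoint from $\overline B$; if no $w\in L$ gave a triple in $T_{\rm thick}$, then $B$ and the union of $U$, $V$ and the remaining (pairwise disjoint) cusp neighborhoods would be two disjoint open sets separating $\phi(L)$, contradicting connectedness. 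That is precisely how the same-component hypothesis does its work, in a few lines. To salvage your approach you would have to replace Step 4 by an argument of comparable substance (e.g.\ exploiting conicality of $u$), which as written you do not have.
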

\begin{com} This proves that $w$ lies in the same component as $u,v$.Also, perhaps the Lemma holds without assuming non-parabolic.\end{com}

\begin{proof}
Let $L$ be the component of $\boundary G$ containing $u$ and $v$.
Let $\phi(L)$ denote the image of $L$ in $T \cup \boundary G$ under the continuous map
$\phi(w)=(u,v,w)$.
Since $\phi(L)$ is connected and Hausdorff and contains $\phi(u)$ and $\phi(v)$,
it must contain a third point $\phi(w)$ that lies in $T$.

If $\phi(w)$ doesn't lie in any cusp neighborhood then we are done, so let $B$ denote a cusp neighborhood containing $\phi(w)$.
 Observe that the closure of $B$ contains a single boundary point and thus contains neither $\phi(u)$ nor $\phi(v)$

Since $T\cup\boundary G$ is a compact Hausdorff space we may
choose $U$, $V$ to be open neighborhoods of $\phi(u),\phi(v)$ that are also disjoint from the closure of $B$.
Now, suppose that $\phi(L)$ is covered by $U,V$ and a collection of cusp neighborhoods,
and observe that this collection must include $B$.
Let $A$ denote the union of $U,V$ and all cusp neighborhoods besides $B$.
Then $A,B$ provides a separation of $\phi(L)$ - which is impossible.
\end{proof}

We may now conclude as in the proof of Theorem~\ref{thm:soft separation}.

For each pair of distinct points $u,v \in \boundary G$, by hypothesis there is a quasi-isometrically
embedded codimension-1 subgroup $H$
such that $u$ and $v$ are separated by $\boundary H \subset \boundary G$.
Let $U_H,V_H$ be the two corresponding regions of $\boundary G-\boundary H$.
For $j\in G$ we define the translated wall associated to $jH$
as in the proof of Theorem~\ref{thm:soft separation}.

Let $w \neq u,v$ and define $M_{(u,v,w)}$ as before. Consider the following collection
of open saturated neighborhoods:
\begin{eqnarray} \label{collection2}
\big\{ \cup _{ j\in G} jM_{(u,v,w)} \; : \;   (u,v,w) \in T_{\rm thick} \big\}.
\end{eqnarray}
This collection forms an open covering of $T_{\rm thick}$.

As the sets in collection~(\ref{collection2}) are saturated relative to $T\rightarrow \bar T$,
the compactness of the projection of $T_{\rm thick}$ to
$\bar T$ assures that there is a finite subcollection that still cover
$T_{\rm thick}$.

Consider an infinite order element $g\in G$. If $g$ is loxodromic then $g$ has two
distinct fixpoints $g^{\pm\infty}$  in $\boundary G$.
If $g^{\pm\infty}$ lie in distinct components of $\boundary G$
then the ``boundary'' of a conjugate of one of the finite edge groups from the accessability splitting
will function to separate them below. So let us consider the case where $g^{\pm\infty}$ lie in the same component.

Since $g^{\pm\infty}$ are not parabolic points, Lemma~\ref{Lem:w} provides a point
 $w' \in \boundary G$ such that:
$$(g^{+\infty}, g^{-\infty}, w') \in T - \bigcup_{i=1}^n \bigcup_{g \in G} gB_i.$$
We have shown that $(g^{+\infty}, g^{-\infty}, w')$ lies in one of the sets of our finite subcollection,
so in particular, $(g^{+\infty}, g^{-\infty}, w')$ must lie in some $jM_{(u,v,w)}$ associated to some point
$(u,v,w) \in T$ as above,
where $M_{(u,v,w)}=U_{(u,v,w)}\times V_{(u,v,w)}\times W_{(u,v,w)}$
and where the associated quasi-isometrically embedded codimension-1 subgroup $H_{(u,v,w)}$
has the property that $\boundary ( H_{(u,v,w)})$ separates $U_{(u,v,w)}$ from $V_{(u,v,w)}$.
Thus  $g^{+\infty}\in jU_{(u,v,w)}$ and $g^{-\infty}\in jV_{(u,v,w)}$ and these are separated by
$j\boundary H_{(u,v,w)}$. Now note that $G$ acts as a convergence group on $T\cup \boundary G$
and that considering the $G$-orbit of a point in $T_{\rm thick}$ yields a map from $\Gamma$
to $T$ which is proper when restricted to the axis of $g$. As in the proof of Theorem~\ref{thm:soft separation}
we thus conclude that the element $g$ is ``separated'' by the wall associated to $jN_{(u,v,w)}$.

This proves that the axis separation condition holds for the two types of loxodromic elements.
\begin{com}
We shall add all finite accessability edge groups to our finite collection (in addition to those we choose above using compactness).
So suppose they are in the same component. \end{com}
 By hypothesis any maximal parabolic
subgroup $P \subset G$ acts properly on the cube complex dual to the finitely many codimension-1 subgroups of $G$ whose intersections with $P$
properly cubulate it. Taking into account all these codimension-1 subgroups of $G$ we get a finite collection to which Lemma~\ref{lem:r.h. axes separation}
applies so that $G$ acts properly on the associated dual cube complex.

\bibliographystyle{alpha}
\bibliography{C:/papers/wise}

%
%
\end{document}